\newtheorem{theorem}{Theorem}[section]
\newtheorem{lemma}[theorem]{Lemma}
\newtheorem{corollary}[theorem]{Corollary}
\newtheorem{definition}[theorem]{Definition}
\newtheorem{example}[theorem]{Example}
\newtheorem{remark}[theorem]{Remark}
\newcommand{\refpart}[1]{{\it (#1)}}
\newcommand{\hpgde}[1]{E{\textstyle\left(#1\right)}}
\newcommand{\hpgo}[2]{{}_{#1}\mbox{\rm F}_{\!#2}}
\newcommand{\hpg}[5]{{}_{#1}\mbox{\rm F}_{\!#2}\! \left(\left.{#3 \atop #4}\right|\, #5 \right) }
\newcommand{\AJ}[1]{\mbox{\rm #1}}
\newcommand{\ZZ}{{\Bbb Z}}
\newcommand{\QQ}{{\Bbb Q}}
\newcommand{\CC}{{\Bbb C}}
\newcommand{\PP}{{\Bbb P}}
\newcommand{\RR}{{\Bbb R}}
\newcommand{\OO}{{\cal O}}
\begin{document}

\title{Algorithms and differential relations for Belyi functions}
%{Transformations between the Heun and \\
%Gauss hypergeometric functions\\
%of the hyperbolic type}

\author{Mark van Hoeij\footnote{Department of Mathematics, 
        Florida State University, Tallahassee, Florida 32306, USA.
        E-mail: {\sf hoeij@math.fsu.edu}. Supported by NSF grant 1017880},
        Raimundas Vidunas\footnote{Lab of Geometric \& Algebraic Algorithms,
        Department of Informatics \& Telecommunications,
        National Kapodistrian University of Athens,
        Panepistimiopolis 15784 Greece . E-mail: {\sf rvidunas@gmail.com}.
        Supported by JSPS grant 20740075.}}
\date{}
\maketitle

\begin{abstract}
A tool package for computing genus 0 Belyi functions is presented,
including simplification routines, computation of moduli fields, decompositions, dessins d'enfant.
The main algorithm for computing the Belyi functions
themselves is based on implied transformations of the hypergeometric
differential equation to Fuchsian equations, preferably with few singular points.
This gives interesting differential relations between polynomial components
of a Belyi function.
\end{abstract}

\section{Introduction}
\label{sec:intro}

Although Belyi functions % \cite{Wikipedia} 
is a captivating field of research
in algebraic geometry, Galois theory and related fields,
their computation of degree over 20 is still considered hard 
even for genus 0 Belyi functions % coverings $\PP^1\to\PP^1$
\cite[Example 2.4.10]{LandoZvonkin}. % \cite[pg. 18]{zvonkin08}, 
Grothendieck doubted that  \cite[pg.~248]{GrothESQ}
{\em ``there is a uniform method for solving the problem by computer''}.
The main algorithm of this paper computes genus~0 Belyi functions 
with a given branching pattern using implied pull-back transformations 
of the hypergeometric differential equation. The algorithm is efficient when 
the transformed Fuchsian equations \cite{Wikipedia} have just a few singular points.
In the case of a possible pull-back transformation to Heun's equation 
(with 4 singularities), Belyi functions of degree 60 can be computed
within minutes using a modern computer algebra package.

We analyze computation of Belyi functions by three methods:
the most  straightforward one, comparison of expressions for their
logarithmic derivatives, and use of pull-back transformations
between differential equations. Computational complexity of each method
is well reflected by the number of {\em parasitic solutions} \cite{kreines}.
The degree 54 example in \S \ref{sec:parasite} estimates about 350 parasitic
Galois orbits of the logarithmic derivative method.  That is very far beyond
the reach of Gr\"obner basis implementations.  And yet, the algorithm in
this paper (download {\sf ComputeBelyi.mpl} from \cite{HeunURL}) computes
this example in 4 seconds.

Symbolic identification of pulled-back differential equations gives 
interesting differential relations between polynomial components 
of a Belyi functions. In particular, this comprehensively clarifies 
appearance of Chebyshev and Jacobi polynomials  
in Belyi functions; see \S \ref{ex:chebyshev}, \ref{sec:dihedral}.

Just computed Belyi functions usually have long expressions, 
especially when the definition field has a high degree. 
Then a large minimal field polynomial is typically utilized
by a computer algebra package. The problems of simplifying
the definition field and optimizing the output functions by M\"obius
transformations are briefly considered in \S \ref{sec:nfields} and
\S \ref{sec:simplify}. Additionally, \S \ref{OtherAlgorithms} discusses
a number of other computational issues: finding the composition lattice
of a given Belyi function, and getting its {\em dessins d'enfant}.
Computation of moduli fields is discussed in \S \ref{sec:conic}.

Generally, a {\em Belyi function} is a map $\varphi:S\to\PP_z^1$
from a Riemann surface $S$ to $\PP^1(\CC)$ that only branches in the fibers
$\varphi=0$, $\varphi=1$, $\varphi=\infty$. 
In this paper we consider only rational (genus $0$) Belyi functions. 
That is, we assume $S\cong\PP^1$.

The following definitions are from \cite{HyperbHeun}.
They help us to characterize the Belyi functions 
which are computed most efficiently with the use of pull-back transformations
between Fuchsian equations.
\begin{definition} \label{df:klmregular} 
Given positive integers $k,\ell,m$, a Belyi function $\varphi:\PP^1\to\PP^1$ 
is called {\em $(k,\ell,m)$-regular} if all points above $\varphi=1$ have the branching order $k$,
all points above $\varphi=0$ have the branching order $\ell$, and all points above $\varphi=\infty$
have the branching order $m$. 
\end{definition}

Examples of $(2,3,m)$-regular Belyi functions with $m\in\{3,4,5\}$
are the well-known Galois coverings $\PP^1\to\PP^1$ of degree 12, 24, 60
with the tetrahedral $A_4$, octahedral $S_4$ or icosahedral $A_5$ monodromy groups,
respectively. The Platonic solids give their dessins d'enfant \cite{platonic}.

\begin{definition} \label{df:klmnregular}
Given yet another positive integer $n$, the
 Belyi function \mbox{$\varphi:\PP^1\to\PP^1$}
is called {\em $(k,\ell,m)$-minus-$n$-regular} % (or just {\em $(k,\ell,m)$-minus-$n$})
if, with exactly $n$ exceptions, all points above $\varphi=1$ have the branching order $k$,
all points above $\varphi=0$ have the branching order $\ell$, and all points above $\varphi=\infty$
have the branching order $m$. We will also use the shorter term  {\em $(k,\ell,m)$-minus-$n$}.
% Belyi functions as well.
\end{definition}

\begin{definition}
% With integers $(k,\ell,m)$ fixed,
Let $\varphi$ be a $(k,\ell,m)$-minus-$n$-regular Belyi function. % for some $n$.
The {\em regular branchings} of $\varphi$
are the points above $z=1$ of order $k$, the points above $z=0$ of order $\ell$,
and the points above $z=\infty$ of order $m$. The other $n$ points in the three fibers 
are called {\em exceptional points} of $\varphi$. 
\end{definition}

As utilized in \cite{HyperbHeun}, the $(k,\ell,m)$-minus-$n$ Belyi functions pull-back 
hypergeometric equations with the local exponent differences $1/k,1/\ell,1/m$ to Fuchsian equations 
with $n$ singularities. When $n=3$, the pulled-back equation is normalizable to a hypergeometric
equation as well. When $n=4$, the pulled-back equation normalizable to Heun's equation;
see \S \ref{sec:pullback} for more details. In \cite{HyperbHeun},
all $(k,\ell,m)$-minus-4 Belyi functions with $1/k+1/\ell+1/m<1$ are classified.
% If we additionally assume that there must be ,
There are in total 366 Galois orbits of these Belyi functions with regular branchings 
in each of the 3 fibers. The maximal degree is 60, and the largest Galois orbit has 15 Belyi functions.
The Belyi functions without a regular branching in some fiber appear in the list \cite{HeunClass}
of {\em parametric} hypergeometric-to-Heun transformations.
Two algorithms were independently used in \cite{HyperbHeun} to compute the whole list of Belyi functions;
one of them uses Hensel modular lifting and is non-deterministic.

This article basically supplements \cite{HyperbHeun} by explaining the deterministic algorithm
used there, and a few mentioned auxiliary algorithms. 
The deterministic algorithm uses pull-back transformations to get extra algebraic equations
between undetermined coefficients of a target Belyi map. It is described in \S \ref{sec:pullback},
and a {\sf Maple} implementation is available at \cite[\sf ComputeBelyi.mpl]{HeunURL}.
The algorithm is effective to compute $(k,\ell,m)$-minus-$n$ Belyi functions with small $n$, 
particularly for $n\le 5$.
Auxiliary algorithms are described in \S \ref{sec:conic} and \S \ref{OtherAlgorithms}.

The following definitions will be convenient in presenting our algorithms, examples
and their analysis.
\begin{definition}
Let $\varphi(x)$ be a Belyi function. A {\em bachelor point} of $\varphi$ is
% is a point in one of the three exceptional fibers $\varphi=0$, $\varphi=1$, $\varphi=\infty$
such that there are no other points in the same fiber with the same branching order.
A {\em point-couple} of $\varphi$ consists of two points in the same fiber, having the
same branching order, such that there no other points in that fiber with the same branching order.
\end{definition}

\begin{definition}
A Belyi function $\varphi(x)$ is called {\em pure} if there is a fiber
where all branching orders are equal to $2$. We will call $\varphi(x)$ {\em almost pure}
if there is a fiber where all branching orders except at one point are equal to $2$. 
\end{definition}

Bachelor points must be in the three exceptional fibers $\varphi=0$, $\varphi=1$, $\varphi=\infty$,
unless the degree $d=1$. Almost pure Belyi functions have a bachelor point, clearly.
The exceptional points of $(k,\ell,m)$-minus-$n$ functions are frequently bachelor or form point-couples.

\section{Computing Belyi functions}
\label{ProgramV}

Most straightforwardly, a rational Belyi function with a given branching pattern 
is found by computing a polynomial identity  $A=B+C$ such that factorizations of $A,B,C$
reflect the given branching pattern above $\varphi=0$, $\varphi=\infty$, $\varphi=1$.
The Belyi function $\varphi(x)$ is recovered as
\begin{equation} \label{eq:belyiacb}
\varphi(x)=\frac{A}{B},\qquad \varphi(x)-1=\frac{C}{B}.
\end{equation}
% If $e$ points of branching order $f$ are prescribed above $\varphi=0$,
% then the monic polynomial $F$
% the square-free factorizations\footnote{The square-free factorization of a polynomial $F$
% is the factorization $H_1^{k_1}H_2^{k_2}$}
In particular, the polynomial identity for a $(k,\ell,m)$-minus-$n$ Belyi functions is 
\begin{equation} \label{eq:belyiabc}
P^\ell\,U=Q^m\,V+R^k\,W,
\end{equation}
where $P,Q,R$ are monic polynomials in $\CC[x]$ whose roots 
are the regular branchings, and $U,V,W$ are polynomials whose (possibly multiple) roots 
are the exceptional points. 
The polynomials $P,Q,R$ should not have multiple or common roots.
One of the polynomials $U,V,W$ may be assumed to be monic.
If there is a bachelor point, it can be assumed to be $x=\infty$ 
without extension\footnote{There is then a canonical form for a computed Belyi function.
With $x=\infty$ fixed, an affine translation $x\mapsto x+\beta$
is used to make the Galois orbit sum of some roots equal 0, and an affine scaling 
$x\mapsto\alpha x$ is used to make an appropriate quotient (with the minimal homogeneous weight)
of two non-zero coefficients equal to 1. 

While computing with unassigned $x=1$, 
it is convenient to let the affine scalings $x\mapsto\alpha x$ act on the coefficients and solutions.
The equations are then weighted-homogeneous; the weight of a polynomial coefficient 
equals its degree as a symmetric function in the polynomials roots. 
This weighting applies to the other two described methods as well, 
and is demonstrated in the example of \S \ref{ex:deg54}.}
of the moduli field. If there are two more bachelor points, they can be similarly
assumed to be $x=0$ and $x=1$ due to affine transformations. 
It is usually convenient to assign $x=\infty$ even if there are no bachelor points.

The degrees of the polynomials in (\ref{eq:belyiabc}) are set by the branching pattern
and the assignment of $x=\infty$. Their coefficients are to be determined.
The straightforward method just expands (\ref{eq:belyiabc})
and compares the $d+1$ coefficients w.r.t. $x$. 
The number of undetermined coefficients is $d+1$ as well:
this is equal to $d+2$ (the number of distinct points in the 3 fibers by the Hurwitz formula)
plus 2 (scalar factors for non-monic polynomials, say $U,W$) 
minus 3 (the degrees of freedom of M\"obius transformations).
Solving this system of equations is not practical for Belyi functions of degree $\ge12$.
One reason is numerous {\em parasitic} \cite{kreines} solutions where 
the three components in $A=B+C$ %some polynomials  in (\ref{eq:belyiabc}) 
have common roots.  % Parasitic solutions give identities $A=B+C$ with common roots. 
Undesired coalescence  of roots of $A,B$ or $C$ is frequent as well. 
Parasitic solutions may even arise in families of positive dimension.

Generally, let us refer to a system of polynomial equations for undetermined coefficients
of a Belyi function (with a given branching pattern)
as the {\em algebraic equations} or an {\em algebraic system}.

\subsection{The logarithmic derivative ansatz}
\label{sec:logdiff}

Differentiation helps to compute Belyi functions more efficiently,
as is occasionally demonstrated \cite[\S 10]{Couveignes94},
\cite{Hempel}, \cite[\S 2.2]{LandoZvonkin}, \cite[Prop.~2]{MovReiter}.
% According to Couveignes, this goes back to Friecke or Hecke?
A systematic logarithmic derivative ansatz was formulated and
extensively used by the second author \cite[\S 3]{thyperbolic}, \cite{VidunasKitaev}, 
\cite[\S 4.1]{HeunClass}. Here we recall the ansatz and prove that
it indeed leads to an algebraic system with (generally) fewer parasitic solutions.
A computational peculiarity is described by Lemma \ref{th:depeq}.
% and differential relations for some Belyi functions are analyzed in \cite{}.

The key observation is the following.
If $\varphi(x)$ is a Belyi function, then the roots of $\varphi'(x)$ are the branching points 
above $\varphi=0$ and $\varphi=1$ with the multiplicities reduced by 1.
In the setting (\ref{eq:belyiabc}) of $(k,\ell,m)$-minus-$n$ functions,
the factorized shape of logarithmic derivatives of $\varphi(x)$ and $\varphi(x)-1$
must be the following: 
\begin{align} \label{logd1}
\frac{\varphi'(x)}{\varphi(x)}=h_1 \frac{R^{k-1}\,W}{P\,Q\,F}, \qquad 
%=\ell\,\frac{Q'}{Q}+\frac{V'}{V}-k\,\frac{P'}{P}-\frac{U'}{U},\\ \label{logd2}
\frac{\varphi'(x)}{\varphi(x)-1}=h_2 \frac{P^{\ell-1}\,U}{Q\,R\,F}.
%=m\,\frac{R'}{R}+\frac{W'}{W}-k\,\frac{P'}{P}-\frac{U'}{U}.
\end{align}
Here $h_1,h_2$ are constants, and  $F$ is the product of irreducible factors 
of $U\,V\,W$, each to the power 1.
If $x=\infty$ lies above $\varphi=\infty$ then
\begin{equation} \label{eq:infless}
h_1=h_2=\mbox{[ the branching order at $x=\infty$ ]},
\end{equation}
as this is the residue of both logarithmic derivatives at $x=\infty$.
%and has the branching order $e$, then $h_1=h_2=e$.
% This is also noticed in \cite{MovasatiReiter} as well.
For $j\in\{0,1,\infty\}$, let $n_j$ denote the number of distinct points above
$\varphi=j$.

On the other hand,
\begin{align} \label{logd2}
\frac{\varphi'(x)}{\varphi(x)} & = \ell\,\frac{P'}{P}+\frac{U'}{U}-m\,\frac{Q'}{Q}-\frac{V'}{V},
\\ \label{logd2a} %\quad \mbox{etc.}
\frac{\varphi'(x)}{\varphi(x)-1} & = k\,\frac{R'}{R}+\frac{W'}{W}-m\,\frac{Q'}{Q}-\frac{V'}{V}.
\end{align}
% and similarly for the logarithmic derivative of $\varphi(x)-1$. 
We have two expressions for both logarithmic derivatives. 
A strong algebraic system is obtained by 
%comparing the first power series terms at $x=\infty$, or by 
subtracting the two expressions for $\varphi'(x)/\varphi(x)$ and similarly 
two expressions for $\varphi'(x)/(\varphi(x)-1)$, and considering 
the coefficients to $x$  in the numerators. Let us refer to the two numerators 
(after rational simplification, and disregarding multiplication by non-zero constants)
as {\em derived numerators} of $\varphi(x)$ and $\varphi(x)-1$.
They give algebraic equations of degree at most $d+1-n_j$ with $j\in\{0,1\}$, respectively.
For comparison, the straightforward method gives algebraic equations of degree up to $d$.
If $x=\infty$ lies above $\varphi=\infty$, we immediately use (\ref{eq:infless}) and 
the degree bound is $d-n_j$. The number of algebraic equations 
is then $2d-n_0-n_1=d+n_\infty-2$. Most importantly, the new algebraic system
has fewer parasitic solutions in general, as characterized by Lemma \ref{th:parasitic} 
and Corollary \ref{th:klmparasitic} below.

This logarithmic derivative ansatz does not use the location $\varphi=1$ of the third fiber.
Therefore all polynomials, including $U,V,W$ in (\ref{eq:belyiabc}), can be assumed to be monic.
The number of undetermined variables is then $(d+2)-3$. The new algebraic system is
over-determined when $n_\infty>1$. 
The polynomial identity $A=B+C$ has to be adjusted by constant multiples 
(say, to $A$ and $C$) at the latest stage. The constant multiple to $A$ can be determined 
by evaluating  $A$, $B$ at a root of $C$.

If $k=2$ in (\ref{eq:belyiabc}), the polynomial $R$ can be eliminated symbolically.
A similar symbolic elimination is possible when computing pure or almost pure Belyi functions.
This symbolic elimination can lead to useful differential expressions for polynomial
components of a Belyi map, as demonstrated in \S \ref{ex:chebyshev}, \S \ref{sec:davenport} here.

To make use of (\ref{eq:infless}), it is convenient to assign $x=\infty$ to a point above $\varphi=\infty$ 
(perhaps after permuting the three fibers) even if there are no bachelor points. 
The easiest algebraic equations % (coming from the leading terms of the numerators)
%for undetermined coefficients coming from the identifications of two logarithmic derivatives 
are typically independent, but there is always  the following dependency.
\begin{lemma} \label{th:depeq}
Let $\varphi(x)$ denote a Belyi map such that $x=\infty$ is a pole of order $p$.
Let $E'_1,E'_2,\ldots$ denote the sequence of leading coefficients (to $x$) in the derived numerator
of $\varphi(x)$, and let $E''_1,E''_2,\ldots$ denote the similar sequence in the derived numerator
of $\varphi(x)-1$. There is a dependency between the algebraic equations 
$E'_1,\ldots,E'_p$ and $E''_1,\ldots,E''_p$. 
\end{lemma}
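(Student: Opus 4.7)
The plan is to exploit the universal identity
\[
\frac{\varphi'}{\varphi} - \frac{\varphi'}{\varphi-1} \;=\; -\frac{\varphi'}{\varphi(\varphi-1)},
\]
whose right-hand side vanishes to order at least $p+1$ at $x=\infty$: since $\varphi\sim\alpha x^{p}$ we have $\varphi' \sim \alpha p\, x^{p-1}$ and $\varphi(\varphi-1) \sim \alpha^{2} x^{2p}$, so the ratio is $O(1/x^{p+1})$. Write $L_1 = f_1 - g_1$ and $L_2 = f_2 - g_2$ for the differences of the two expressions for $\varphi'/\varphi$ and for $\varphi'/(\varphi-1)$; their denominator-cleared versions are $N_1 = L_1\cdot PQF$ and $N_2 = L_2\cdot QRF$. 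A direct computation using (\ref{logd1}), (\ref{logd2}), (\ref{logd2a}), and (\ref{eq:infless}) yields the polynomial identity in the undetermined coefficients
\[
R(x)\, N_1(x) \;-\; P(x)\, N_2(x) \;=\; PQRF\cdot\!\left(\frac{A'}{A}-\frac{C'}{C}\right) + p\,(A-C),
\]
where $A=P^{\ell}U$ and $C=R^{k}W$. Both sides are polynomials in $x$ of degree at most $d-1=\deg\varphi-1$.

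The main step is to compare coefficients of $x^{d-j}$ on the two sides of this identity for $j=1,\ldots,p$. On the left, the coefficient at $x^{d-j}$ is a triangular linear combination of $E'_1,\ldots,E'_j$ and $E''_1,\ldots,E''_j$, with entries drawn from the top coefficients of $R$ and $P$. On the right, one expands $A'/A - C'/C = \sum_{k\geq 1}\tilde\delta_k/x^{k+1}$ at infinity via Newton's identities, where $\tilde\delta_k$ is the difference of the $k$-th power sums of the roots of $A$ and of $C$; setting $\Delta_j := a_{d-j}-c_{d-j}$, one finds $\tilde\delta_k = -k\,\Delta_k + H_k(\Delta_1,\ldots,\Delta_{k-1},\,a_{d-i}+c_{d-i})$. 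Consequently the coefficient of $\Delta_j$ in the $x^{d-j}$-coefficient of the right-hand side is exactly $(p-j)$, while $\Delta_{j+1},\Delta_{j+2},\ldots$ do not appear; in particular at $j=p$ the coefficient of $\Delta_p$ vanishes.

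Therefore the $p-1$ identities corresponding to $j=1,\ldots,p-1$ are triangularly solvable for $\Delta_1,\ldots,\Delta_{p-1}$ in terms of $E'_1,\ldots,E'_{p-1}$, $E''_1,\ldots,E''_{p-1}$, and the coefficients of $P,Q,R,U,V,W$. Substituting these expressions into the $j=p$ identity — whose right-hand side involves only $\Delta_1,\ldots,\Delta_{p-1}$ — produces the claimed polynomial relation among $E'_1,\ldots,E'_p$ and $E''_1,\ldots,E''_p$, with coefficients in the ring of ansatz variables. The main technical obstacle is the Newton-identity bookkeeping required to justify the $(p-j)\Delta_j + (\text{lower})$ expansion and to check that the resulting dependency is genuinely non-trivial.
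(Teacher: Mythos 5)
Your argument is correct, and it is at bottom the paper's argument executed with a different (but equivalent) linear combination and with the bookkeeping made explicit. The paper multiplies the two ansatz equations (\ref{eq:ldiffabc}) by $A$ and $C$ so as to cancel the right-hand sides $pAC/F$, arriving at $(A-C)'/(A-C)=B'/B$, hence $A-C\cong B$; the dependency is then read off as the tautology $d-p=d-p$ in the $p$-th power-series term of (\ref{eq:acb}). You instead form $R\,N_1-P\,N_2$, which cancels the $B'/B$ terms and leaves $F\cdot(A'/A-C'/C)+p(A-C)$ with $F=\mathrm{rad}(ABC)$ monic of degree $d+1$; the dependency then appears as the vanishing of the factor $p-j$ at $j=p$. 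Both routes hinge on the same fact, namely that $x=\infty$ being a pole of order $p$ makes the $p$-th coefficient comparison at infinity degenerate; your version never needs $A-C\cong B$ or the degree of $B$, while the paper's makes the geometric reason more visible. The details you flag as a ``technical obstacle'' do check out: since $A'/A-C'/C=\sum_{i\ge1}(p_i^A-p_i^C)\,x^{-i-1}$ and Newton's identities give $p_i^A-p_i^C=-i\,\Delta_i+(\hbox{terms in }\Delta_1,\ldots,\Delta_{i-1})$, the coefficient of $x^{d-j}$ on the right-hand side is $(p-j)\Delta_j$ plus lower $\Delta$'s; on the left-hand side, once (\ref{eq:infless}) is imposed one has $\deg N_1=d-\deg R-1$ and $\deg N_2=d-\deg P-1$, so monicity of $R,P$ makes the coefficient of $x^{d-j}$ equal to $E'_j-E''_j$ plus a combination of earlier $E$'s, i.e.\ triangular with unit diagonal. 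The resulting relation $E'_p-E''_p\in(E'_1,\ldots,E'_{p-1},E''_1,\ldots,E''_{p-1})$ is therefore a genuine non-trivial dependency, and your proof is complete.
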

\begin{proof}
First note the following. The algebraic equations obtained by comparing the power series
at $x=\infty$ of expressions (\ref{logd1}), (\ref{logd2}) for $\varphi'(x)/\varphi(x)$
are derived from $E'_1,E'_2,\ldots$ {\em orderly} by combining previous (so far) members
of $E'_1,E'_2,\ldots$. That is the effect of dividing the derived numerator of $\varphi(x)$
by a denominator. The considered below rational manipulation of 
% expressions for the logarithmic derivatives of $\varphi(x)$ and $\varphi(x)-1$ 
power series at $x=\infty$ leads to a sequence of equations that is a similar {\em orderly} 
consequence of $E'_1,E'_2,\ldots$ and $E''_1,E''_2,\ldots$.

Let $d$ denote the degree of $\varphi(x)$. In the setting (\ref{eq:belyiacb}),
we have \mbox{$\deg B=d-p$}, $\deg A=\deg C=d$. The logarithmic derivative ansatz gives
\begin{equation} \label{eq:ldiffabc}
\frac{A'}{A}-\frac{B'}{B}=\frac{pC}{F}, \qquad
\frac{C'}{C}-\frac{B'}{B}=\frac{pA}{F},
\end{equation}
where $F$ is the product of irreducible factors of $A\,B\,C$, each to the power 1.
We eliminate the right-hand sides after multiplying the first equation by $A$ 
and the second equation by $C$. This leads to
\begin{equation} \label{eq:acb}
\frac{(A-C)'}{A-C}=\frac{B'}{B}.
\end{equation}
%The residues of both sides are equal, hence $\deg (A-C)=\deg B$. 
Therefore $A-C$ is equal to $B$ up to a constant 
multiple\footnote{This tells us that the algebraic equations of the straightforward method 
of expanding $A=B+C$ are implied by the logarithmic derivative ansatz,
except for ignoring a leading coefficient. In particular, the coincidence of the
leading $p$ terms of $A,C$ follows from (\ref{eq:belyiacb}). It also follows that
considering a third logarithmic derivative does not add independent equations
between undetermined coefficients.
For example, the logarithmic derivative of $\varphi(x)/(\varphi(x)-1)$ is the difference of 
left-hand sides in (\ref{eq:ldiffabc}), and it is the difference of the right-hand sides 
due to the established $A=B+C$. }.
The equations for undetermined coefficients from the power series in (\ref{eq:acb})
are {\em orderly} implications of $E'_1,E'_2,\ldots$ and $E''_1,E''_2,\ldots$. 
In particular, the equations $E'_1,\ldots,E'_{p-1}$ and $E''_1,\ldots,E''_{p-1}$
imply that the first $p$ terms of $A$ and $C$ (starting from $x^d$) coincide.
But the equations $E'_p$, $E''_p$ add the tautology $d-p=d-p$ in the next power series 
term of (\ref{eq:acb}).
\end{proof}

Now we characterize parasitic solutions of the logarithmic derivative ansatz.
\begin{lemma} \label{th:parasitic}
For a parasitic solution $A=B+C$ of the straightforward computation from $(\ref{eq:belyiacb})$,
let $F$ denote the product of irreducible factors of $A\,B\,C$ each to the power $1$.
Let $G=\mbox{\rm gcd}(A,B,C)$ and $H=F/G$. The triple $A=B+C$ is a solution
of the logarithmic derivative ansatz exactly when the following two conditions hold:
\begin{itemize}
\item $H$ is a polynomial, with simple roots;
\item a root of $G$ is a root of $H$ if and only if it divides one of 
$A,B,C$ in a higher order than others.
\end{itemize}
\end{lemma}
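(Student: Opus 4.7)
The plan is to recast the ansatz as a polynomial identity, factor out the common divisor $G$, and translate the resulting condition into the two stated properties of $G$ and $H$. First I would rewrite the first equation of $(\ref{eq:ldiffabc})$ as the polynomial identity $(C'B - CB')F = pCAB$, using $A' = B' + C'$ to replace $A'B - AB'$ by $C'B - CB'$. Substituting $A = GA^*$, $B = GB^*$, $C = GC^*$ with $\mbox{\rm gcd}(A^*, B^*, C^*) = 1$, a direct computation yields $C'B - CB' = G^2(C^{*\prime}B^* - C^*B^{*\prime})$ and $CAB = G^3 A^*B^*C^*$, so the ansatz becomes $(C^{*\prime}B^* - C^*B^{*\prime})F = pG\, A^*B^*C^*$. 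The coprime reduced triple $(A^*, B^*, C^*)$, still satisfying $A^* = B^* + C^*$ and inheriting a Belyi branching structure, satisfies the corresponding non-parasitic identity $(C^{*\prime}B^* - C^*B^{*\prime})F^* = pA^*B^*C^*$ with $F^* = \mbox{\rm rad}(A^*B^*C^*)$. Consequently the parasitic ansatz holds if and only if $F = GF^*$, and the second equation of $(\ref{eq:ldiffabc})$ is handled symmetrically.

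Next I would analyze $F = GF^*$ locally. Since $F$ and $F^*$ are squarefree, $v_\alpha(F), v_\alpha(F^*) \in \{0, 1\}$, while $v_\alpha(G) = \min(a, b, c)$ where $a = v_\alpha(A)$, $b = v_\alpha(B)$, $c = v_\alpha(C)$. Matching $v_\alpha(F) = v_\alpha(G) + v_\alpha(F^*)$ at every $\alpha$ first forces $v_\alpha(G) \le 1$ everywhere, i.e.\ $G$ is squarefree; this is equivalent to condition 1, since $H = F/G$ is a polynomial iff $v_\alpha(G) \le v_\alpha(F) = 1$, and its roots are then automatically simple. At any root $\alpha$ of $G$, we further require $v_\alpha(F^*) = 0$, so $\alpha$ is not a root of $A^*B^*C^*$; equivalently, $a = b = c = v_\alpha(G) = 1$. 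To match condition 2, I would observe that under condition 1 no root of $G$ is a root of $H$ (since $v_\alpha(H) = 1 - 1 = 0$ there), so the biconditional degenerates to the requirement that no root of $G$ divides one of $A, B, C$ in strictly higher order than the others, forcing the three valuations to coincide at every root of $G$.

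The hard part will be justifying the claim that the reduced triple truly satisfies its own ansatz: under conditions 1 and 2 the common divisor $G$ cannot share roots with $P, Q, R$ (else some $v_\alpha(A), v_\alpha(B), v_\alpha(C)$ would exceed $1$), so $G$ divides each of $U, V, W$ with multiplicity one, and the quotient retains the parametric form $A^* = P^\ell (U/G)$, $B^* = Q^m (V/G)$, $C^* = R^k (W/G)$, giving an honest Belyi-shaped triple of smaller exceptional support for which $(\ref{logd1})$ and $(\ref{logd2})$ match. Conversely, when a condition fails I would exhibit a local valuation mismatch at a common root: for instance at $\alpha$ with $(a, b, c) = (2, 1, 1)$ one finds $v_\alpha(A'/A - B'/B) = -1$ but $v_\alpha(pC/F) = c - 1 = 0$, ruling out the ansatz identity.
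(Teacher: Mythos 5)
Your overall route---clear denominators, divide out $G=\gcd(A,B,C)$, and compare the parasitic identity with the identity satisfied by the reduced coprime triple---is essentially the paper's own proof, which phrases the same comparison as the statement that $(A:B:C:F)$ and $(A/G:B/G:C/G:H)$ represent the same point of $\PP^3(\CC(x))$. The genuine gap is the step you yourself flag: the claim that the reduced triple satisfies $(C^{*\prime}B^*-C^*B^{*\prime})\,F^*=p\,A^*B^*C^*$ with $F^*=\mathrm{rad}(A^*B^*C^*)$. That identity is not a formal consequence of coprimality or of the product shape; it is equivalent to saying that $A^*/B^*$ branches only over $0,1,\infty$. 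In general one has $C^{*\prime}B^*-C^*B^{*\prime}\cong (A^*B^*C^*/F^*)\,E$, where $E$ collects (with multiplicities) the critical points of $A^*/B^*$ lying over other values, so the correct equivalence is ``ansatz $\iff F\,E\cong G\,F^*$''; your ``ansatz $\iff F=GF^*$'' silently sets $E=1$. The attempted repair in your last paragraph is circular: showing that the quotient ``retains the parametric form $A^*=P^\ell(U/G)$, \dots'' only says the reduced triple has the prescribed shape, and having the prescribed shape is exactly what does not imply that the two expressions (\ref{logd1}) and (\ref{logd2}) agree---the parasitic triple itself has the prescribed shape. Hence the direction ``conditions $\Rightarrow$ ansatz'' is not established, and the factor $E$ must also be controlled in the converse direction.

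A secondary problem is your reading of $F$. You take $F$ to be the radical of the specialized product $ABC$, hence squarefree; then the first bullet collapses to ``$G$ is squarefree'' and, as you note, no root of $G$ can be a root of $H$, so the ``if and only if'' in the second bullet loses one direction. That degenerate reading does not reproduce Corollary \ref{th:klmparasitic}, where $H=PQRF/G$ is built from the ansatz components, is generally not squarefree at a parasitic specialization, and roots of $G$ must be roots of $H$ exactly at the common roots with unequal multiplicities; it is also inconsistent with the profile inequalities used in \S\ref{sec:parasite}. For instance, a point where $A$, $B$, $C$ all vanish simply passes your two conditions, but at such a point at least three of the prescribed factors making up the ansatz denominator collide, so the algebraic system that is actually being solved is violated there. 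The valuation bookkeeping has to be carried out with the denominator the ansatz really uses---the product of the prescribed component factors, whose local order is the number of colliding prescribed points---rather than with $\mathrm{rad}(ABC)$ of the specialized polynomials.
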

\begin{proof}
A parasitic solution of the straightforward method defines a unique point $(A:B:C)$ in $\PP^2(\CC(x))$,
independent of reduction of common factors. The logarithmic derivatives 
$\varphi'(x)/\varphi(x)$ and $\varphi'(x)/(\varphi(x)-1)$ are the same rational functions
in both the assumed and simplified settings. From (\ref{eq:ldiffabc}) it follows
that $(A:B:C:F)$ must represent the same point in $\PP^3(\CC(x))$ as
the simplified $(A/G:B/G:C/G:H)$.
\end{proof}

\begin{corollary} \label{th:klmparasitic}
Parasitic solutions $(P,Q,R,U,V,W)$ of the logarithmic derivative ansatz $(\ref{logd1})$
for $(k,\ell,m)$-minus-$n$ Belyi functions are characterized as follows.
Let $G=\mbox{gcd}(P^kU,Q^\ell V,R^mW)$ for a parasitic solution.
Let $F$ denote the product of irreducible factors of $U\,V\,W$ each to the power $1$
as in $(\ref{logd1})$, and let $H=PQRF/G$. Then the parasitic solution must have:
\begin{itemize}
\item $H$ is a polynomial, with simple roots;
\item a root of $G$ is a root of $H$ if and only if it divides one of 
$P^kU,Q^\ell V,R^mW$ in a higher order than others.
\end{itemize}
\end{corollary}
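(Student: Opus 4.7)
The plan is to derive the corollary as a direct specialization of Lemma~\ref{th:parasitic} to the polynomial identity (\ref{eq:belyiabc}), with the substitution $A = P^{\ell}U$, $B = Q^{m}V$, $C = R^{k}W$. Under this substitution, the straightforward identity $A = B+C$ becomes (\ref{eq:belyiabc}), and I would first verify that the ansatz (\ref{logd1}) is the explicit instance of (\ref{eq:ldiffabc}): expanding $A'/A - B'/B$ via (\ref{logd2}), clearing the denominator $PQUV$, and substituting $A - B = R^{k}W$ produces, after cancelling the factor $R^{k-1}W$ from the numerator, the shape $h_1 R^{k-1}W/(PQF)$ with $F = \mathrm{rad}(UVW)$. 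The second line of (\ref{logd1}) follows symmetrically from $C'/C - B'/B$, and the residue (\ref{eq:infless}) fixes $h_1 = h_2$.

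Next, I would match the lemma's invariants to the corollary's. The corollary's $G = \gcd(P^{\ell}U, Q^{m}V, R^{k}W)$ is the lemma's $\gcd(A,B,C)$. The key identification is $\mathrm{rad}(ABC) = PQR\cdot F$: since $P,Q,R$ are squarefree and pairwise coprime by the standing hypothesis on (\ref{eq:belyiabc}), and within each fiber the regular branching points are disjoint from the exceptional ones, the radicals split as $\mathrm{rad}(P^{\ell}U\cdot Q^{m}V\cdot R^{k}W) = PQR\cdot\mathrm{rad}(UVW) = PQRF$. Therefore the lemma's $H = \mathrm{rad}(ABC)/G$ coincides with the corollary's $H = PQRF/G$, and the two bullet conditions of Lemma~\ref{th:parasitic} transcribe verbatim.

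The main obstacle I anticipate is the edge case in which a parasitic solution causes a root of $P$, $Q$, or $R$ to coincide with a root of $U$, $V$, or $W$; such a coincidence would inflate multiplicities in $PQRF$ relative to $\mathrm{rad}(ABC)$ and threaten the identification above. I would handle this by arguing that the denominator $PQF$ appearing in the first line of (\ref{logd1}) is the genuine reduced denominator of the logarithmic derivative $\varphi'/\varphi$, which has only simple poles; any accidental overlap between a root of $P$ and a root of $F$ would produce a double pole, contradicting this. Equivalently, such coincidences are forced into the common factor $G$ so that the quotient $PQRF/G$ still carries each root with multiplicity one, matching the lemma's $H$. Once this bookkeeping is settled, the proof reduces to invoking Lemma~\ref{th:parasitic}.
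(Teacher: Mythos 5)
Your overall plan---specialize Lemma \ref{th:parasitic} with $A=P^{\ell}U$, $B=Q^{m}V$, $C=R^{k}W$---is exactly the route the paper intends (the corollary is stated with no separate proof), and your identification of $G$ with $\gcd(A,B,C)$ is fine. The gap is the bridging step $\mathrm{rad}(ABC)=PQRF$. You justify it by the ``standing hypothesis'' that $P,Q,R$ are squarefree, pairwise coprime, and disjoint from the exceptional points; but those hypotheses describe a genuine Belyi function with the target branching pattern, whereas a parasitic solution is precisely a specialization of the undetermined coefficients in which they may fail (multiple roots of $P$, $Q$ or $R$, roots shared among $P,Q,R$, or overlaps with $U,V,W$). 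In any such case $PQRF$ is a strict multiple of $\mathrm{rad}(ABC)$, so the corollary's $H=PQRF/G$ is not the lemma's $H$, and the two bullet conditions do not ``transcribe verbatim'': the excess multiplicities in $PQRF$ are exactly what the corollary's conditions are meant to constrain.

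Your attempted repair does not close this. From ``$\varphi'/\varphi$ has only simple poles'' you cannot conclude that a root of $P$ cannot coincide with a root of $F$ (nor that $P,Q,R$ are squarefree and pairwise coprime): in (\ref{logd1}) a repeated root of the written denominator $PQF$ need not produce a multiple pole, because the numerator $R^{k-1}W$ may vanish there---and at a root of $G$ it typically does, since such a root appears in all three of $P^{\ell}U$, $Q^{m}V$, $R^{k}W$. So there is no contradiction, only a root-by-root constraint relating the orders in numerator and denominator; extracting that constraint (equivalently, the projective comparison of $(A\!:\!B\!:\!C\!:\!PQRF)$ with the reduced $(A/G\!:\!B/G\!:\!C/G\!:\!H)$ in the spirit of the proof of Lemma \ref{th:parasitic}, but with the ansatz denominator term $PQRF$ in place of $\mathrm{rad}(ABC)$) is the actual content of the corollary. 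Your closing sentence, that ``such coincidences are forced into the common factor $G$ so that $PQRF/G$ still carries each root with multiplicity one,'' is not an argument but a restatement of the first bullet you are trying to prove; moreover you only consider overlaps of $P,Q,R$ with $U,V,W$, while internal coalescences of $P,Q,R$ raise the same issue. (A side remark: the exponents in the corollary's $\gcd(P^{k}U,Q^{\ell}V,R^{m}W)$ are evidently a typo for $\gcd(P^{\ell}U,Q^{m}V,R^{k}W)$, consistent with (\ref{eq:belyiabc}) and with the profile computations in \S\ref{sec:parasite}; your reading is the correct one.)
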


\subsection{Using transformations of differential equations} 
\label{sec:pullback}
 
To get an even more restrictive system of algebraic equations, we utilize the fact that
our Belyi functions transform hypergeometric equations to Fuchsian equations
with a small number of singularities. We assume the setting of $(k,\ell,m)$-minus-$n$ functions,
with small $n$. 

The Gauss hypergeometric equation is the Fuchsian equation
\begin{equation} \label{HGE}
\frac{d^2y(z)}{dz^2}+
\left(\frac{C}{z}+\frac{A+B-C+1}{z-1}\right)\,\frac{dy(z)}{dz}+\frac{A\,B}{z\,(z-1)}\,y(z)=0.
%z\,(1-z)\,\frac{d^2y(z)}{dz^2}+
%\left(\C-(\A\!+\!\B\!+\!1)\,z\right)\,\frac{dy(z)}{dz}-\A\,\B\,y(z)=0.
\end{equation}
The singularities are $z=0$, $z=1$, $z=\infty$, and the local exponent differences are
$1-C$, $C-A-B$, $A-B$, respectively. Let $E(e_1,e_2,e_3)$ denote a hypergeometric equation
with the local exponent differences $e_1,e_2,e_3$ assigned to the singular points in some 
order\footnote{There is an orbit of 24 hypergeometric equations with the same local exponent differences 
(acted upon by permutation and change of sign), as reflected by the 24 general solutions by Kummer.}.
Pull-back transformations have the form
\begin{equation} \label{algtransf}
z \longmapsto\varphi(x), \qquad y(z)\longmapsto Y(x)=\theta(x)\,y(\varphi(x)),
\end{equation}
where $\varphi(x)$ is a rational function, and $\theta(x)$ is
a radical function (an algebraic root of a rational function).
Under pull-back transformations, 
the exponent differences are multiplied by the branching order in each fiber.
To get non-singular points above $z\in\{0,1,\infty\}$, some exponent differences have to be restricted
to the value $1/k$, where $k\in\ZZ_{\ge2}$ is a branching order in that fiber \cite{HeunClass}.

A $(k,\ell,m)$-minus-$n$ Belyi covering pulls-back $E(1/k,1/\ell,1/m)$ to a Fuchsian equation
with $n$ singularities, after a proper choice of $\theta(x)$ as described in \cite[\S 5]{HyperbHeun}.
If $n=3$, the pulled-back equation can be normalized to a hypergeometric equation again.
If $n=4$, we can normalize to Heun's equation 
 \begin{equation*} \label{Heun}
\frac{d^2y(x)}{dx^2}+\biggl(\frac{c}{x}+\frac{d}{x-1}+\frac{a+b-c-d+1}{x-t}\biggr)\frac{dy(x)}{dx}
+\frac{abx-q}{x(x-1)(x-t)}y(x)=0.
\end{equation*}
Its singularities are $x=0$, $x=1$, $x=t$, $x=\infty$. The exponent differences there are $1-c$, $1-d$,
$c+d-a-b$, $a-b$, respectively. The {\em accessory} parameter $q$ does not influence the local exponents.
The method uses the following lemma.
\begin{lemma} \label{lm:gpback}
Let $\varphi(x)$ be a Belyi map determined by $(\ref{eq:belyiabc})$.
Hypergeometric equation $(\ref{HGE})$ with
\[
A=\frac12\left(1-\frac1k-\frac1{\ell}-\frac1m\right), \quad
B=\frac12\left(1-\frac1k-\frac1{\ell}+\frac1m\right), \quad
C=1-\frac{1}{\ell}.
\]
is transformed to the following differential equation 
under the pull-back transformation
%\begin{equation*}
$z\mapsto \varphi(x)$, $y(z)\mapsto \left(Q^{m} V\right)^{\!A} \, Y(\varphi(x))$:
%\end{equation*}
\begin{align*}
& \frac{d^2Y(x)}{dx^2}+
\left(\frac{F'}{F}-\frac{U'}{\ell\,U}-\frac{V'}{m\,V}-\frac{W'}{k\,W}\right) \frac{Y(x)}{dx} + \\
& \! + A \left[ B \left( \frac{ h_1h_2\,P^{\ell-2}R^{k-2}\,U\,W}{Q^2 F^2}
- \frac{m^2Q'{}^2}{Q^2} -\frac{V'{}^2}{V^2} \right)+\frac{mQ''}{Q}+\frac{V''}{V}
+\qquad \right. \nonumber \\
& \qquad \left. +\left(\frac1k+\frac1\ell\right)\!\frac{mQ'V'}{Q\,V}
+\left(\frac{m Q'}{Q}+\frac{V'}{V}\right) \!
\left(\frac{F'}{F}-\frac{U'}{\ell\,U}-\frac{V'}{V}-\frac{W'}{k\,W}\right)
\right]Y(x)=0.
\end{align*}
\end{lemma}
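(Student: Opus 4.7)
The plan is to derive the transformed equation by a direct application of the standard change-of-variable formula for second-order linear ODEs, and then match coefficients against the stated expression. The first ingredient is the general fact (an immediate consequence of the chain rule) that if $y(z)$ solves $y''+p(z)\,y'+q(z)\,y=0$ and one substitutes $z=\varphi(x)$ together with $y(\varphi(x))=\theta(x)\,Y(x)$, then $Y(x)$ satisfies $Y''+\tilde p\,Y'+\tilde q\,Y=0$ with
\begin{align*}
\tilde p &=\frac{2\theta'}{\theta}-\frac{\varphi''}{\varphi'}+\varphi'\,p(\varphi),\\
\tilde q &=q(\varphi)\,(\varphi')^2+\frac{\theta''}{\theta}+\frac{\theta'}{\theta}\!\left(\varphi'\,p(\varphi)-\frac{\varphi''}{\varphi'}\right).
\end{align*}
Setting $T=\theta'/\theta$ and using the $\tilde p$-formula to rewrite the parenthetical as $\tilde p-2T$ yields the equivalent form $\tilde q=q(\varphi)\,(\varphi')^2-T^2+T'+T\,\tilde p$, which is the cleaner one to match against the lemma.

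Next I would feed in the specific data. From $\theta=(Q^m V)^A$ one reads $T=A(mQ'/Q+V'/V)$ and then $T'$ by the quotient rule. The identity $(\ref{eq:belyiabc})$ gives $\varphi=P^\ell U/(Q^m V)$ and $\varphi-1=R^k W/(Q^m V)$; formulas $(\ref{logd2})$--$(\ref{logd2a})$ express $\varphi'/\varphi$ and $\varphi'/(\varphi-1)$ as sums of logarithmic derivatives; comparison with $(\ref{logd1})$ forces $h_1=h_2=:h$, and multiplying out gives $\varphi'=h\,P^{\ell-1}UR^{k-1}W/(Q^{m+1}VF)$, whence $\varphi''/\varphi'$ is yet another sum of logarithmic derivatives in $P,Q,R,U,V,W,F$. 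With $C=1-1/\ell$ and $A+B-C+1=1-1/k$ the hypergeometric coefficient is $p(z)=(1-1/\ell)/z+(1-1/k)/(z-1)$, so $\varphi'\,p(\varphi)=(1-1/\ell)(\varphi'/\varphi)+(1-1/k)(\varphi'/(\varphi-1))$ is again a sum of logarithmic derivatives.

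Assembling $\tilde p$, the coefficients at $P'/P$ and $R'/R$ cancel between $-\varphi''/\varphi'$ and $\varphi'\,p(\varphi)$; the coefficient at $Q'/Q$ cancels only once the $2T$ piece is added, using $2A=1-1/k-1/\ell-1/m$; and the surviving coefficients recover the stated $F'/F-U'/(\ell U)-V'/(mV)-W'/(kW)$. For $\tilde q$, the piece $q(\varphi)\,(\varphi')^2=AB(\varphi'/\varphi)(\varphi'/(\varphi-1))$ produces the $ABh_1h_2\,P^{\ell-2}R^{k-2}UW/(Q^2F^2)$ summand in one step, while $-T^2+T'+T\tilde p$ expands and regroups — after rewriting $T\tilde p$ as $T$ times the lemma's bracketed form $F'/F-U'/(\ell U)-V'/V-W'/(kW)$ plus a correction absorbed into the $V'^2/V^2$ and $Q'V'/(QV)$ coefficients — into all the remaining summands. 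Matching the coefficient of $Q'^2/Q^2$ to $-ABm^2$ uses the single identity $Bm=Am+1$, equivalent to $B-A=1/m$ and immediate from the definitions of $A$ and $B$; the $V'^2/V^2$ and $Q'V'/(QV)$ coefficients settle by the same identity.

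The only real obstacle is bookkeeping: roughly a dozen logarithmic-derivative terms must be tracked through squarings, differentiations, and multiplications, and one must be careful about the convention for the rescaling $y\mapsto\theta\,Y$ versus $Y\mapsto\theta^{-1}y$ (a wrong choice gives $-2T$ in place of $+2T$ in $\tilde p$ and destroys the cancellations). Every step is nonetheless mechanical, and all cancellations ultimately rest on the relation $2A=1-1/k-1/\ell-1/m$ and its consequence $B-A=1/m$.
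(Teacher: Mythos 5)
Your proposal is correct and follows essentially the same route as the paper, whose proof is simply the ``lengthy symbolic computation'' using (\ref{logd2}) and $\varphi-1=R^kW/(Q^mV)$ that you spell out: the standard pull-back/rescaling formula with $\theta=(Q^mV)^A$, the logarithmic-derivative expressions for $\varphi'/\varphi$, $\varphi'/(\varphi-1)$, $\varphi''/\varphi'$, and the identities $2A=1-\frac1k-\frac1\ell-\frac1m$, $B-A=\frac1m$ to settle the coefficient matching. Your sign/convention check on $\theta$ (so that $+2\theta'/\theta$ cancels the $Q'/Q$ term and turns the $V'/V$ coefficient into $-1/m$) is exactly the point where the computation could go wrong, and you have it right.
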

\begin{proof} A lengthy symbolic computation, using %(\ref{eq:varphis}) and 
(\ref{logd2}), $\varphi(x)-1=R^kW/Q^\ell V$, etc.
% Elimination of $Q^{\ell-2}R^{m-2}$ gives a homogeneous {\em ``trilinear"} differential equation
% for $P$, $Q$, $R$, of order 2 in $P$ and of order 1 in $Q$ or $R$.
\end{proof}

The transformed equation is to be identified with the target Fuchsian equation
with $n$ singularities. Its local exponents can be conveniently determined 
using Riemann's $P$-symbols. 
Any accessory parameters are new additional variables.
A Fuchsian equation with $n$ singularities has $n-3$ accessory parameters.
The terms to $dY(x)/dx$ are always identical, but comparison of the terms to $Y(x)$
gives new algebraic equations between the undetermined variables 
unless\footnote{Lemma \ref{lm:gpback} fails to give new algebraic relations 
when $1/k+1/\ell+1/m=1$, even if we change the sign of some exponent differences.
The equations with $A\neq0$ are then simple projective or fractional-linear transformations
of the equation with $A=0$.

Another case when Lemma \ref{lm:gpback} does not give anything new
(apart from elimination of new accessory parameters) is when $Q=1$. The underlying benefit of 
this method apparently lies in simplifying the factor $Q$ or $Q^2$ in the denominator of the $Y(x)$ term.
There is also simplification of $V^2$ to $V$ in the same denominator, but that simplification is
apparently implied by the log-diff ansatz, as example in \S \ref{sec:davenport} demonstrates.
} $A=0$. %is $1/k+1/\ell+1/m=1$.

The logarithmic derivative ansatz allowed symbolic elimination of $R$ when $k=2$.
If $k=2$, $\ell=3$ and $m\neq 6$, Lemma \ref{lm:gpback} allows symbolic elimination of $P$
additionally. Elimination of $R,P$ from three differential expressions gives
a non-linear differential equation for $Q$, with the coefficients of (presumably monic) $U,V,W$ 
and accessory parameters as parametric variables. 
After substitution of general polynomial expressions 
% (with undetermined coefficients)
for $Q$ and $U,V,W$, we collect to the powers of $x$ and get a system of algebraic equation
for undetermined coefficients.

Even if $(k,\ell)\neq(2,3)$, most of the coefficients of $P,Q,R$ can be eliminated subsequently
from a sequence of equations obtained by identifying two logarithmic derivatives (\ref{logd2}) 
and the $Y(x)$ terms in Lemma \ref{lm:gpback}. 
If $x=\infty$ is assigned as a point above $\varphi=\infty$,
the highest order terms usually allow elimination of all but $2n-5$ non-homogeneous 
variables\footnote{The number $2n-5$ coincides with the number of parameters 
characterizing the Schwarz map~\cite{Wikipedia}. Like in \cite[\S 6.1]{HyperbHeun},
the pulled-back equation has $n-3$ location and $n-3$ accessory parameters,
plus an undetermined constant multiple. Adding a singularity $(n\mapsto n+1$) 
basically adds two new variables:  the location of the new singularity, and an accessory parameter. } 
(plus one if the affine scalings $x\to\alpha x$ are left to act). 
This is demonstrated by \S \ref{ex:deg54} here and \cite[Example 6.2]{HyperbHeun}, 
where pull-backs to Heun's % or hypergeometric 
equation are reduced to algebraic computations in 3 % or 1 
undetermined values.

The order of $k,\ell,m$ (or the 3 fibers) is not essential, of course. In our algorithm realization,
we sought to assign a point $x=\infty$ to a bachelor point of maximal possible branching order $e$, 
and then assign that fiber as $\varphi=\infty$. With this we take advantage of 
the explicit constants $h_1=h_2=e$ in (\ref{logd2}) and sooner eliminations.
The hardest Gr\"obner basis computation is with the last 3 (or 4 if weighted homogeneous) 
variables. The strategy of using first a total degree, then elimination of 2 variables appears to be fastest
for complicated examples. 
Our implementation in {\sf Maple} 15 can compute all 366 Galois orbits of Belyi functions in \cite{HyperbHeun}.
% in XX minutes...

\subsection{A degree 15 example}
\label{ex:deg54}

Here we demonstrate computation of 
$(2,3,7)$-minus-$4$ Belyi functions with the branching fractions $1/2,1/2,1/2,1/7$, 
of degree $15$.  Let us assign the branching fraction $1/7$ to $x=\infty$. Then $U=V=1$,
and the polynomials $P,Q,R,W$ are monic, without multiple roots, of degree $5,2,6,3$ respectively.
If we would assume $W=x(x-1)(x-t)$, the Heun equation would have \mbox{$a=5/28$},
$b=9/28$ and $c=d=1/2$. To avoid increase of the moduli field, 
we rather assume $W=x^3+w_1x^2+w_2x+w_3$.
% Here the $x^2$ term is zero-ed by a translation $x\to x+\beta$, 
% so that only scaling M\"obius transformations $x\to \alpha\,x$ are left to act.
The transformed Fuchsian equation must have the following term to $Y(x)$: $ab(x-q)/W$.
Rather than fixing $x=0$, we normalize $Q=x^2+c$ by a translation $x\mapsto x+\alpha$. 
The logarithmic derivative ansatz gives
\[
R=3P'Q-7PQ',\qquad P^2=2QR'W+QRW'-7Q'RW,
\]
while Lemma $\ref{lm:gpback}$ gives
\[
\frac{13}{84}\left(\frac{P}{Q^2W}-\frac{49Q'{}^2}{Q^2}\right)
+\frac{7Q''}{Q}+\frac{7Q'W'}{2QW}=\frac{135\,(x-q)}{28W}.
\]
The polynomials $R$, $P$ can be eliminated symbolically from the first and third equations. 
The second equation then expands to
\begin{align*}
& \textstyle 324\left(\frac{29}{12}w_2-\frac{20}3c-\frac{784}{1521}w_1^2+\frac{425}{169}qw_1
-\frac{2025}{676}q^2 \right)x^8 \\ 
& \textstyle +\frac{1350}{13}\left(\frac{193}{5}w_3-\frac{196}{39}w_1w_2+\frac{447}{13}qw_2
-\frac{12307}{195}cw_1-\frac{1239}{13}cq\right)x^7\\
& +\ldots \\
& \textstyle +\frac{1620}{13}c^2\left(\frac34c^2w_2-\frac{405}{52}c^2q^2
-\frac{49}{15}cw_2^2-\frac{784}{45}cw_1w_3-\frac{575}{13}cqw_3-\frac{1372}{39}w_3^2\right)=0.
\end{align*}
The variables $w_2,w_3$ are eliminated by the leading two 
coefficients\footnote{The algebraic system is weighted-homogeneous because of the scaling 
action $x\mapsto \alpha x$, with the weights of $q,c,w_1,w_2,w_3$ equal to $1,2,1,2,3$, 
respectively.}.
{\sf Maple} solves the system immediately. There are $4$ Galois orbits of solutions, $3$ of them 
parasitic\footnote{Two parasitic solutions have $c=w_2=w_3=0$ but different $q/w_1$. 
The other parasitic solution is peculiar: it gives a degree 9 Belyi covering with the branching
pattern $4\,[2]+1=3\,[3]=[7]+1+1$ defined over $\QQ(\sqrt{-7})$. But \cite[Table 4]{HeunClass}
gives Belyi covering $H_{11}$ with this branching pattern defined over $\QQ$. 
The quadratic extension occurs because $x=\infty$ is assigned to a non-unique point
(from the parasitic perspective) with the branching fraction $1/7$.}.
The proper solution is defined over a cubic field $K$. 
% By methods of $\cite[\S 3.3]{BelyiKit}$
A small defining polynomial (see also \S ~ref{sec:nfields}) is found for $K$: 
$\xi^3+2\xi^2+6\xi-8$. The solution is unseemly; it particularly 
has\footnote{Useful arithmetic information about $K$ is given in \S \ref{sec:nfields}.}
\begin{align*}
\frac{c}{w_1^2}=\frac{7^4\,(1-2\xi)\,(3+\xi)\,(1-\xi-\frac12\xi^2)^8}
{5^3\,(1-\xi)^5\,(161-86\xi+112\xi^2)^2}.
\end{align*}
The resulting expression for $\varphi(x)$ is long, but $P$ has a linear factor
over $K$. This can be used to optimize $\varphi$ by affine transformations after
keeping $x=\infty$ as it is, or after assigning it to the $K$-root of $P$.

Using a trick from \S \ref{sec:simplify},
the following expression is obtained after a M\"obius transformation:
\begin{align} \label{eq:deg15}
\varphi(x)=&\frac{1162+4282\xi+1523\xi^2}{27} \,
\frac{(14x+12+\xi-\xi^2)^3}{3x+3-\frac12\xi^2}\times \nonumber\\
& \frac{\left(x^4+x^3-(1+2\xi+\frac12\xi^2)x^2-(5+\xi+\frac32\xi^2)x-4\right)^3}
{\left(5x^2+(13+3\xi-\frac12\xi^2)x+4+4\xi+\xi^2\right)^7}.
\end{align}
This Belyi function is identified as the Galois orbit H45 in \cite{HyperbHeun}.
It is a composition factor of the degree 60 covering H46.

\subsection{Counting parasitic solutions}
\label{sec:parasite}

Computation of the degree 15 covering led to 3 parasitic solutions.
The same number of parasitic solutions was found in \cite[Example 6.2]{HyperbHeun}
when looking for degree 54 Belyi functions with the branching pattern
$27\,[2]=18\,[3]=7\,[7]+2+1+1+1$ by the same combination of the logarithmic derivative ansatz 
and Lemma \ref{lm:gpback}. Here we use Corollary \ref{th:klmparasitic}
to count the number of parasitic solutions for both examples 
of the logarithmic derivative ansatz alone.

We start with the simpler degree 15 example. 
Each single root $x=u$ that is simplified in the vector
$(P^3\!:\!Q^7\!:\!R^2W\!:\!PQRW)$ with some multiplicity is restricted independently.
By a {\em profile} $(\alpha,\beta,\gamma,\delta)$ of the simplified root $x=u$ 
we mean the multiplicities $\alpha\le5$, $\beta\le2$, $\gamma\le6$, $\delta\le3$
with which it divides $P,Q,R,W$, respectively. Lemma \ref{lm:gpback} implies
that the profiles must satisfy one of the following conditions:
\[ \begin{array}{l}
3\alpha=7\beta=2\gamma+\delta=\alpha+\beta+\gamma+\delta;\\
3\alpha=7\beta=\alpha+\beta+\gamma+\delta-1<2\gamma+\delta;\\
3\alpha=2\gamma+\delta=\alpha+\beta+\gamma+\delta-1<7\beta;\\
7\beta=2\gamma+\delta=\alpha+\beta+\gamma+\delta-1<3\alpha.
\end{array}
\]
We cannot have $3\alpha=7\beta$, so only the last two possibilities are left. 
In particular, $\gamma=\alpha+\beta-1$.
For each $\alpha\le5$, $\beta\le2$ we solve linearly for $\gamma,\delta$
and check their non-negativity and upper bounds. We discard 
$\alpha+\beta+\gamma+\delta\le 1$ as giving the non-parasitic solution.
Seven possible profiles are found, with
\[ (\alpha,\beta)\in \{(1,1),(2,1),(3,1),(2,2),(3,2),(4,2),(5,2)\}.
\]
Each profile already gives a branching pattern for a parasitic solution;
it is what is left out of $5\,[3]=7\,[2]+1=6\,[2]+1+1+1$ after the simplification. 
For example, $(\alpha,\beta)=(5,2)$ gives a linear parasitic solution.
Besides, we can combine some profiles to simplifications with two roots:
\[
(\alpha,\beta) \in \{
(1,1)+(1,1);\ (1,1)+(2,1);\ (1,1)+(3,1);\ (2,1)+(3,1) \}.
\]
Further combinations are restricted by $\beta\le 2$ foremost. In total,
we have $7+4$ branching patterns for parasitic solutions. 
Beside the mentioned linear solution, we get the Belyi functions
$H_{11},H_{26},H_{29},H_{32},H_{34},H_{43}$ of \cite[Table 4]{HeunClass}
and the Galois orbits G17, G38, H47 of \cite{HyperbHeun}.
Only $(\alpha,\beta)=(1,1)+(1,1)$ gives a branching pattern ($N_{15}$ in \cite[Table 5]{HeunClass})
with no Belyi functions. The number of parasitic Galois orbits of the logarithmic derivative ansatz 
is 10.

In the setting of degree 54 Belyi functions, we are looking for simplifications
of the polynomial vector $(P^3\!:\!Q^7V\!:\!R^2\!:\!PQRV)$. 
We will conclude that {\em all} Belyi functions of \cite[Tables 2.3.7--2.3.13]{HeunClass}
with a branching fraction $2/7,2/8,\ldots$ or $2/13$ appear as parasitic solutions of the
logarithmic derivative ansatz. Besides, there are 7 parametric Galois orbits, 
most of the Belyi functions in \cite[Table 4]{HeunClass} among the parasitic solutions, 
and many more other parasitic solutions.

Let the profile vector $(\alpha,\beta,\gamma,\delta)$ denote the multiplicities 
$\alpha\le18$, $\beta\le27$, $\gamma\le7$, $\delta\le3$ of a single root $x=u$ dividing
$P,R,Q,V$, respectively. Corollary \ref{th:klmparasitic} leads to the following 
restrictions and profiles:
\begin{itemize}
\item $3\alpha=2\beta=7\gamma+\delta=\alpha+\beta+\gamma+\delta$. We get $18\gamma=5\beta$
and a single profile $P_0:(\alpha,\beta,\gamma,\delta)=(12,18,5,1)$. The root $x=u$ is not coupled 
to other coefficients, hence inclusion of $P_0$ introduces a free parameter in parasitic  solutions. 
Since the degree decreases by 36, there will be at most one parameter in parasitic solutions.
\item $3\alpha=2\beta=\alpha+\beta+\gamma+\delta-1<7\gamma+\delta$. 
We get $\alpha=2(\gamma+\delta-1)$, $\beta=3(\gamma+\delta-1)$ and $\gamma>5\delta-6$. 
We exclude $(\alpha,\beta)=(18,27)$ as not a solution of the weighted-homogeneous system,
and $\gamma+\delta\le 1$. We get 16 profiles. 
\item $3\alpha=7\gamma+\delta=\alpha+\beta+\gamma+\delta-1<2\beta$. 
We get $\delta=3\alpha-7\gamma$, $\beta=6\gamma+1-\alpha$ and $12\gamma+2>5\alpha$. 
Since $\delta\in[0,3]$, $\alpha\in[\frac73\gamma,\frac73\gamma+1]$.
 We get $5$ profiles, with $(\alpha,\gamma)\in\{(5,2),(7,3),(12,5),(14,6),(17,7)\}$. 
 \item $2\beta=7\gamma+\delta=\alpha+\beta+\gamma+\delta-1<3\alpha$. 
We get $\delta=2\beta-7\gamma$, $\alpha=6\gamma+1-\beta$ and $18\gamma+3>5\beta$. 
There are $8$ profiles, with $(\alpha,\gamma)\in\{(3,1),(6,2),(8,3),(11,4),(13,5),(15,6),(16,6),(18,7)\}$. 
\end{itemize} 
There are thus $30$ profiles in total. They can be combined independently to simplification factors with several roots, if only they do not use up the 18, 27, 7, 3 roots of $P,R,Q,V$, respectively.
The profiles $(12,18,6,1)$, $(12,19,5,1)$, $(13,18,5,1)$,  $(12,18,5,2)$ give specializations of
parametric solutions, as they lead to the same simplification of branching patterns as $P_0$.
We will count only combinations of the other 26 profiles.

% In \cite[Definition 2.1]{HyperbHeun}, {\em branching fractions} of a $(k,\ell,m)$-minus-$n$ Belyi 
% functions are defined as quotients of branching orders by the ...

Each profile has a specific action on the branching fractions. For example, $P_0$ removes
one instance of $1/7$ from the starting branching fractions $1/7,1/7,1/7,2/7$. 
The branching fraction $2/7$ at $x=\infty$ is never affected. Further,
\begin{itemize}
\item The profiles with $\delta=1$ replace one instance of $1/7$ by a number from 
$\{2/7,3/7,4/7,5/7,6/7,8/7,1/3,2/3,4/3,1/2\}$. This means that solving for the degree 54 function
by the logarithmic derivative ansatz leads to {\em all} Belyi functions of \cite[Table 2.3.7]{HyperbHeun}
with a branching fraction $2/7$. That is 60 parasitic Galois orbits with 153 dessins.
Parasitic solutions with several branching fractions $2/7$ may be obtained over 
an extension of their moduli fields, as $x=\infty$ is assigned to one of the $2/7$'s. 
Besides, we get several low degree coverings from \cite[Table 4]{HeunClass}.
\item The profiles with $\delta=2$ replace two instances of $1/7$ by a number from 
$\{2/7,3/7,1/3,1/2\}$. This adds a few more coverings from \cite[Table 4]{HeunClass},
and the well-known degree 6 covering $4(x^2-x+1)^3/27x^2(x-1)^2$. 
%Combined, we get 18 out of 38+1 (for $H_{47}$) coverings 
% of \cite[Table 1]{HeunForm} as well, those giving $2\alpha$ or $2\beta$ in the second column.
\item The profiles with $\delta=0$ append a branching fraction $8/7$, $9/7$, $10/7$, $11/7$, $12/7$,
$13/7$, $4/3$, $5/3$, $6/3$, $3/2$ or $4/2$. In this way, Belyi functions that pull-back $E(1/2,1/3,1/7)$ to 
Fuchsian equations with 5, 6 or 7 singularities occur. Some of those Belyi functions pull-back
specific $E(1/2,1/3,1/k)$ to equations with fewer singularities. For example, 
the profile $(12,8,7,0)$ gives the covering E16 of \cite[Table 2.3.13]{HyperbHeun}.
%that pulls-back $E(1/2,1/3,1/13)$. 
Applying the profile $(2,3,2,0)$ three times and adding the profile $(3,4,1,1)$
gives the degree 28 covering G8 of \cite[Table 2.3.8]{HyperbHeun}. 
\end{itemize}
There are no profiles with $\delta=3$. By combining the profiles with different $\delta$ we obtain:
\begin{itemize}
\item All Belyi functions of \cite[Tables 2.3.8--2.3.13]{HeunClass}
with a branching fraction $2/8,2/9,\ldots$ or $2/13$, nearly 50\% of those tables.
Together with the mentioned  functions from \cite[Tables 2.3.7]{HeunClass},
this already gives 116 parasitic Galois orbits (with 263 dessins in total). 
\item Belyi functions B26, C19, D34 of \cite[Tables 2.4.5, 2.4.7]{HeunClass}.
\item All functions in \cite[Table 4]{HeunClass} with a branching order 2 in the last partition
of the third column, except $H_{44}$ but plus $H_{24}, H_{28}, H_{29}, H_{32}$.
This gives 27 out of 48 Belyi functions from \cite[Table 4]{HeunClass}.
\item There are 7 parametric solutions that utilize $P_0$: 
the degree 18 covering in \cite[\S 9]{VidunasFE} is defined actually over $\QQ(\sqrt{-7})$; 
the degree 6 function explicitly given while discussing $\delta=2$;
and $H_2,H_8,H_{32},H_{34},H_{35}$ of \cite[Table 4]{HeunClass}.
\item 235 branching patterns that do not occur in \cite{HeunClass}, \cite{HyperbHeun}.
Expectedly, they give over 200 parasitic Galois orbits.
The whole list of parasitic branching patterns (and known solutions) is given in
\cite[\sf Parasitic54.txt]{HeunURL}.
\end{itemize}
In total, the expected number of parasitic Galois orbits is around 350. 
The use of the implied pull-back from $E(1/2,1/3,1/7)$ to Heun's equation with the exponent differences
$1/7,1/7,1/7,2/7$ in \cite[Example 6.2]{HyperbHeun} leads to just 3 parasitic solutions (with no parameters):
the degree 18 function defined over $\QQ(\sqrt{-7})$, 
and the functions $H_8,H_{34}$ of \cite[Table 4]{HeunClass} of degree 10 or 3.
% \end{example}

\section{The moduli field and obstruction conics}
\label{sec:conic}
When our algorithm finds an explicit Belyi function, there is no a priori reason to assume
that it is optimal in terms of its realization field (field of definition), or, in terms of its bitsize.
Computational tools are needed for both issues.  This section will focus on realization fields,
while \S \ref{sec:simplify} discusses reducing the bitsize by M\"obius transformations
after a realization field has been selected.

Several computational problems arise when a computed Belyi function is not guaranteed to be
expressed over its moduli field. The basic questions are:
\begin{enumerate}
\item Given a Belyi function $\varphi$, how to compute its moduli field $M_{\varphi}$?
% Is there a M\"obius transformation of $\varphi$ in $M_\varphi(x)$?
\item Given $\varphi$, how to determine the fields over which $\varphi$ has a realization
(after a M\"obius transformation)?
\item If $\varphi$ has no realization over $M_\varphi$, can it be realized as a function
on a conic curve defined over $M_\varphi$?
\item If the branching pattern of $\varphi$ has 2 or 3 symmetric fibers, can $\varphi$ be expressed
over a subfield of $M_\varphi$ if the branching fibers are not constrained to $\{0,1,\infty\}$?
\end{enumerate}
Here we recall the relevant definitions and cohomological concepts, and give constructive answers
to the basic questions. Particularly, in \S \ref{SectionC30}--\ref{SectionD45}
we answer the second question by elementary considerations, without direct reference to cohomology.

Let $\OO$ denote the group of M\"obius transformations:
\[ \OO = \left\{ \frac{ax+b}{cx+d} \, | \, a,b,c,d \in \overline{\QQ} {\rm \ with \ }ad-bc \neq 0 \right\} 
\cong {\rm Aut}(\overline{\QQ}(x)/\overline{\QQ}). \]
For $\varphi \in \overline{\QQ}(x)$ let $\OO_{\varphi}$
denote the group of M\"obius automorphisms of $\varphi$:
\[ \OO_{\varphi} = \{ \mu \in O | \varphi \circ \mu = \varphi\} \cong {\rm Aut}(\overline{\QQ}(x)/\overline{\QQ}(\varphi)), \]
Two rational functions $\varphi_1, \varphi_2 \in \overline{\QQ}(x)$ are called 
{\em M\"obius-equivalent}, denoted $\varphi_1 \sim \varphi_2$, 
if there exists $\mu\in\OO$ with $\varphi_1 \circ \mu = \varphi_2$.
Let $\Gamma = {\rm Gal}(\overline{\QQ}/\QQ)$.

Let $\varphi \in \overline{\QQ}(x)$ be a Belyi function. 
A {\em realization field} of $\varphi$ is a number field over which
a M\"obius equivalent function $\varphi\circ\mu$ is defined. 
The {\em moduli field} $M_{\varphi}$ is the fixed field 
of $\{ \sigma \in \Gamma \, | \, \varphi \sim \sigma(\varphi) \}$.
Clearly $M_{\varphi} \subseteq K_{\varphi}$ for any explicit $\varphi\in K_{\varphi}(x)$
over some realization field $K_\varphi$. 
The moduli field is known to be equal to the intersection of the realization fields of $\varphi$.
% all compositions $\varphi\circ\mu$ with $\mu\in O$.

The first question is answered % directly from the definition of the moduli field, 
by checking which Galois conjugates of $\varphi$ are M\"obius-equivalent to $\varphi$,
following the definition of the moduli field. To determine whether $\varphi_1 \sim \varphi_2$, 
we factor the numerator  of $\varphi_1(x) - \varphi_2(y)$.
If it has a factor $p(x,y) \in \overline{\QQ}[x,y]$ with ${\rm deg}_x(p) = {\rm deg}_y(p) = 1$, 
then $\varphi_1 \sim \varphi_2$ and one finds $\mu$ by solving $p(x,y)=0$ with respect to $y$.

The second question is trivial when $M_{\varphi}$ is known to be a realization field.
Otherwise it is canonically answered by Galois cohomology, as elaborated in \S \ref{sec:obstructf}.
The realization fields are determined by a conic curve $C_{\varphi}$ defined over $M_{\varphi}$,
called the {\em obstruction conic}. The realization fields are those extensions $L$ of $M_\varphi$
for which $C_{\varphi}$ has $L$-rational points. First we show how the obstruction conic 
arises directly, in elementary steps, from the above definitions.

\subsection{The obstruction conic for C30}
\label{SectionC30}

By C30 we refer to one of the 366 Galois orbits of minus-4-hyperbolic Belyi functions, see~\cite{HyperbHeun}
for details. We computed this expression, defined over $K=\QQ(\sqrt{-3})$, for its Belyi function:
\begin{equation} \label{eq:c30}
\varphi(x) = \frac{2((x^2+5)\sqrt{-3} - 3x^2 - 60x + 15)(x^2+5x-5)^4}{(12 x)^5}.
\end{equation}
% There is no a priori reason that our algorithm finds the best (in terms of realization field, or in terms of bitsize, see section 4.2),...
Let $\sigma:\sqrt{-3}\mapsto-\sqrt{-3}$ be the non-trivial element of ${\rm Gal}(\QQ(\sqrt{-3})/\QQ)$.
Now $\sigma(\varphi) \sim \varphi$ since we find $\sigma(\varphi)=\varphi\circ\nu$ with $\nu = -5/x$
by factoring the numerator of $\varphi(y)-\sigma(\varphi(x))$. Hence the moduli field is $M=\QQ$.
The symmetry group $\OO_\varphi$ is trivial, since $\varphi(y)-\varphi(x)$ has no linear factors.

Suppose $\varphi \sim g$ for some $g \in L(x)$ with $\sqrt{-3} \not\in L\supset\QQ$.
Write $\varphi = g \circ \mu$ for some $\mu \in \OO$, which must be unique
because $O_{\varphi}$ is trivial. 
That implies $\mu \in L(\sqrt{-3})(x)$ since $\varphi, g \in L(\sqrt{-3})(x)$.
So we can write $\mu = (ax+b)/(cx+d)$ with $a,b,c,d \in L(\sqrt{-3})$ and $ad - bc \neq 0$.

If $c=0$ or $a=0$, we get a contradiction with $\sqrt{-3}\not\in L$.
For instance, if $c=0$ then $g(x)=\varphi(\hat{a}x+\hat{b})$ with $\hat{a},\hat{b}\in L(\sqrt{-3})$. 
The root $-\hat{b}/\hat{a}\in L$ of the denominator of $g(x)$ does not involve $\sqrt{-3}$,
so we may assume $\hat{b}=0$ after an $L(x)$-affine translation. 
But then $\sqrt{-3}$ stays in the numerator.

We may thus assume $c=1$ without loss of generality. 
Write $a = a_0 + a_1 \sqrt{-3}$, $b = b_0 + b_1 \sqrt{-3}$ and $d = d_0 + d_1 \sqrt{-3}$
for some $a_0,a_1,b_0,b_1,d_0,d_1 \in L$.
We can replace $\mu$ by $\mu - a_0$, giving $g(x+a_0)$ still in $L(x)$.
After this we have $a_0 = 0$, but then $a_1\neq 0$.
Replacing $\mu$ by $\mu/a_1$ now gives $g(a_1 x)$ in the place of $g(x)$.
This gives $a_1 = 1$. Therefore we assume $\varphi = g \circ \mu$ with
\begin{equation}  \label{valuem}
	\mu(x) = \frac{ x \sqrt{-3} + b_0 + b_1 \sqrt{-3}}{x + d_0 + d_1 \sqrt{-3}} 
\end{equation}
for some $b_0, b_1,d_0,d_1 \in L$.

Let $\sigma_L:\sqrt{-3} \mapsto -\sqrt{-3}$ be the non-trivial element of ${\rm Gal}(L(\sqrt{-3})/L)$.
Recalling $\nu=-5/x$ in $\sigma(\varphi)=\varphi\circ\nu$, we have
\[ g \circ \sigma_L(\mu) = \sigma_L(g) \circ \sigma_L(\mu) = \sigma_L(g \circ \mu) 
= \sigma(\varphi) = \varphi \circ \nu = g \circ \mu \circ \nu. 
\]
We conclude $\sigma_L(\mu) = \mu \circ \nu$, since the groups $\OO_g, \OO_{\varphi}$ are trivial.
Write the numerator of $\sigma(\mu) - \mu \circ \nu$
as $\sum_{i=0}^2 \sum_{j=0}^1  C_{i,j} \, x^i (\sqrt{-3})^j $ with
$C_{i,j} \in  \QQ[b_0, b_1, d_0, d_1]$.  All $C_{i,j}$ must be zero, giving 
\[ b_1 + d_0 = b_0 - 3 d_1 =  5 + b_0 d_1 - b_1 d_0 = 0. \]
This reduces to
\begin{equation} \label{coniceq}
b_1 = -d_0, \qquad b_0 = 3 d_1, \qquad d_0^2 + 3 d_1^2 + 5 = 0.
\end{equation}
So if $\varphi$ has a realization over $L$ then $d_0^2 + 3 d_1^2 + 5 = 0$
has a solution $d_0,d_1 \in L$.  Conversely, for any solution $d_0, d_1 \in L$, 
one obtains $g = \varphi \circ \mu^{-1}\in L(x)$ with $\mu$ as in (\ref{valuem}), (\ref{coniceq}).
Hence the realization fields for C30 are precisely those number fields that have a rational point
on $d_0^2 + 3 d_1^2 + 5 = 0$.

\begin{remark}
The function $g=\varphi\circ\nu^{-1}$ will be an element of $\QQ(u,v,x)/(u^2+3v^2+5)$.
After a tedious simplification, we obtained this expression for $g$:
\begin{equation} \label{eq:C30uvx}
\frac{\big(\frac13(2u-5)x^2-4vx-2u-5\big)^4
\big( (u-v-10)x^2-(2u+6v)x-3u+3v-30 \big)}{512\,(x^2+3)^5}.
\end{equation}
Any point $(u,v)$ on the conic gives a realization of \AJ{C30}.
Interestingly, any specialization $x\in \QQ$ gives a Belyi function 
% I wouldn't put x \in CC since that can make the denominator 0
in $\QQ(u,v)/(u^2+3v^2+5)$ with the same dessin as \AJ{C30}.
For all $10$ cases of \cite[Table 2]{HyperbHeun} with trivial $\OO_{\varphi}$
we got an expression like (\ref{eq:C30uvx}) that gives conic realizations
with any specialization of $x$.
 
% HOW SURE ARE WE THAT THIS HAPPENS WHENEVER $|\OO_\varphi|=1$?
% WHY WOULD THIS EXPRESSION % (\ref{eq:C30uvx}) 
% IN $x$ NOT HAVE A COMPLICATED ``CONSTANT'' FACTOR IN $\QQ(u,v)/(u^2+3v^2+5)$? 
% CAN WE FORMULATE NICELY THIS OBSERVATION WITHOUT REFERENCE TO DESSINS?
% CAN WE OBTAIN ALL CONIC MODELS OVER A FIELD $L$ BY SUBSTITUTING VALUES $x\in L$?
\end{remark}

\subsection{Obstruction conics generally}
\label{SectionD45}

Given a Belyi function $\varphi(x)\in K(x)$ with a moduli field $M_{\varphi}\subset K$
and trivial $\OO_{\varphi}$, the same routine of expanding $\sigma_L(\mu)=\mu\circ\nu$ 
for an assumed M\"obius transformation $\mu$ and any Galois action $\sigma_L$ 
on $LK \supset L$ produces an equation over $M_\varphi$ obstructing
the realization fields of $\varphi(x)$. Importantly, $\nu$ is unique by $|\OO_\varphi|=1$. 
If the branching pattern of $\varphi$ has a point-couple, assigning those points to $x=0$, $x=\infty$
extends $M_\varphi$ at most quadratically to $M_{\varphi}(\sqrt{A})$ and gives a realization. 
The unique $\nu$ then either $x\mapsto -x$ or has the form $x\mapsto B/x$ for some $B\in M_{\varphi}$.
In the former case, the variable change  $x\mapsto \sqrt{A}\,x$ gives a realization over $M_{\varphi}$.
Otherwise, the following theorem can be applied. 
\begin{theorem} \label{tm:conic}
Suppose that we have a Belyi function $\varphi(x)\in M_\varphi(\sqrt{A})$ where $M_\varphi$ 
is the moduli field, and $A\in M_\varphi$. Suppose that $|\OO_\varphi|=1$. % is trivial.
Let \mbox{$\sigma:\sqrt{A}\mapsto-\sqrt{A}$} be the conjugation in ${\rm Gal}(M_\varphi(\sqrt{A})/M_\varphi)$,
and suppose that $\sigma(\varphi)=\varphi\circ\nu$ with $\nu:x\mapsto B/x$ for some $B\in M_\varphi$.
Then $L\supseteq M_{\varphi}$ is a realization field for $\varphi$ if and only if 
the conic \mbox{$u^2=Av^2+B$} has an $L$-rational point.
\end{theorem}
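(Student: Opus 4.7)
The plan is to generalize the elementary computation of \S \ref{SectionC30}, replacing $\QQ(\sqrt{-3})$ by $M_\varphi(\sqrt{A})$ and the special conic $d_0^2+3d_1^2+5=0$ by the general one $u^2-Av^2=B$. Both directions hinge on uniqueness of the M\"obius decomposition: whenever $\varphi=g\circ\mu$ with $\mu\in\OO$, the factor $\mu$ is determined uniquely, because $\OO_g$ is conjugate to $\OO_\varphi$ and hence also trivial.

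For the necessary direction, suppose $L\supseteq M_\varphi$ is a realization field with $\varphi=g\circ\mu$, $g\in L(x)$; then $\mu\in L(\sqrt{A})(x)$, since both $\varphi$ and $g$ live there. Let $\sigma_L:\sqrt{A}\mapsto-\sqrt{A}$ be the non-trivial element of ${\rm Gal}(L(\sqrt{A})/L)$. Applying $\sigma_L$ to $\varphi=g\circ\mu$ and using $\sigma_L(g)=g$ together with $\sigma_L(\varphi)=\sigma(\varphi)=\varphi\circ\nu$, uniqueness delivers the cocycle identity $\sigma_L(\mu)=\mu\circ\nu$. Writing $\mu$ projectively via a matrix $M=\begin{pmatrix}a&b\\c&d\end{pmatrix}$ and $\nu$ via $N=\begin{pmatrix}0&B\\1&0\end{pmatrix}$, this translates to $\sigma_L(M)=\lambda\,MN$ for some scalar $\lambda\in L(\sqrt{A})^\times$. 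Applying $\sigma_L$ once more and exploiting $N\,\sigma_L(N)=N^2=B\cdot I$ yields $N_{L(\sqrt{A})/L}(\lambda)=1/B$. Hence $B$ is a norm from $L(\sqrt{A})$, i.e.\ $B=u^2-Av^2$ for some $u,v\in L$: precisely an $L$-rational point on the conic.

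Conversely, given such $(u,v)$, set $w=u+v\sqrt{A}$ and $\lambda=1/w$, and construct an explicit non-degenerate $M$ realising $\sigma_L(M)=\lambda\,MN$---for instance with columns $(1,\sqrt{A})^T$ and $(w,-w\sqrt{A})^T$, yielding $\det M=-2w\sqrt{A}\neq 0$. Setting $\mu$ to be the M\"obius transformation of $M$ and $g:=\varphi\circ\mu^{-1}$, the direct check
\[
\sigma_L(g)=\sigma_L(\varphi)\circ\sigma_L(\mu)^{-1}=(\varphi\circ\nu)\circ(\mu\circ\nu)^{-1}=\varphi\circ\mu^{-1}=g
\]
shows $g\in L(x)$, exhibiting $L$ as a realization field. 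The main subtlety will be the projective bookkeeping of $\lambda$: a rescaling $M\mapsto rM$ leaves $\mu$ unchanged but shifts $\lambda$ by the coboundary $\sigma_L(r)/r$, so $\lambda$ itself is not canonical; however every such coboundary has trivial norm, making the condition $N_{L(\sqrt{A})/L}(\lambda)=1/B$ well-defined and intrinsic to $\mu$---the fragment of Hilbert~90 lurking in the argument. As a sanity check, the hypotheses $|\OO_\varphi|=1$ and $\sigma(\varphi)=\varphi\circ\nu$ together implicitly exclude the degenerate case $\sqrt{A}\in M_\varphi$, which would otherwise make $\nu$ a nontrivial automorphism of $\varphi$.
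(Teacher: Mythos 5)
Your proof is correct, and after the shared opening move it follows a genuinely different route from the paper's. Both arguments begin identically: triviality of $\OO_\varphi$ (hence of $\OO_g$) forces the unique $\mu$ in $\varphi=g\circ\mu$ to be defined over $L(\sqrt{A})$ and yields the cocycle identity $\sigma_L(\mu)=\mu\circ\nu$. From there the paper proceeds elementarily, as in \S \ref{SectionC30}: it normalizes $\mu$ to the explicit form (\ref{valuemAgain}) and expands the numerator of $\sigma(\mu)-\mu\circ\nu$ coefficient-wise, so the conic appears directly as the solvability condition (\ref{coniceqAGAIN}) on the entries of $\mu$; this has the practical benefit of handing over the parametrized M\"obius transformation actually used to write down realizations $g=\varphi\circ\mu^{-1}$. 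You instead lift to $\mathrm{GL}_2$, write $\sigma_L(M)=\lambda MN$ with $N^2=B\cdot I$, and extract the conic from the norm condition $N_{L(\sqrt{A})/L}(\lambda)=1/B$, i.e.\ $B$ is a norm from $L(\sqrt{A})$ exactly when $u^2=Av^2+B$ has an $L$-point; your converse construction is also sound (with $w=u+v\sqrt{A}\neq 0$ since $B\neq 0$, one indeed has $B/w=\sigma_L(w)$, so $\sigma_L(M)=\lambda MN$ holds and $\det M=-2w\sqrt{A}\neq0$), and your remark that rescaling $M$ changes $\lambda$ only by an element of norm $1$ correctly disposes of the projective ambiguity. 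What your version buys is conceptual clarity: it makes visible why a norm form (equivalently, the class of the quaternion algebra attached to $(A,B)$, in the spirit of \S \ref{sec:obstructf} and Hilbert 90) governs the realization fields, without the case analysis ($c\neq0$, $a\neq0$, scaling and translating) needed to reach (\ref{valuemAgain}); what the paper's version buys is an immediately usable explicit $\mu$ over any realization field, which the authors exploit computationally. One small point both treatments leave implicit: if $\sqrt{A}\in L$ the equivalence is trivially true ($L$ is automatically a realization field and the conic always has an $L$-point), so restricting to $\sqrt{A}\notin L$, as you and the paper both do, loses nothing.
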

\begin{proof}
Suppose $\varphi \sim g$ for some $g \in L(x)$ with $\sqrt{A} \not\in L\supset M_{\varphi}$.
Write $\varphi = g \circ \mu$ for some $\mu \in \OO$. By the same arguments as in \S \ref{SectionC30},
we can assume
\begin{equation}  \label{valuemAgain}
	\mu(x) = \frac{ x \sqrt{A} + b_0 + b_1 \sqrt{A}}{x + d_0 + d_1 \sqrt{A}} 
\end{equation}
for some $b_0, b_1,d_0,d_1 \in L$. After expanding the numerator of $\sigma(\mu) - \mu \circ \nu$
we get the equations
\[ b_1 + d_0 = b_0 + A d_1 = b_0 d_1 - b_1 d_0 - B = 0. \]
This reduces to
\begin{equation} \label{coniceqAGAIN}
b_1 = -d_0, \qquad b_0 = -A d_1, \qquad d_0^2  = A  d_1^2 + B.
\end{equation}
The theorem follows.
\end{proof}
This theorem can be used to find the obstruction conics for all cases 
of \cite[Table 2]{HyperbHeun} with trivial $\OO_\varphi$: 
B12, C6, C30, F1, F4, F11, H1, H10, H11, H12.
Those branching patterns have at least two point-couples.
\begin{remark} \label{rm:ratconic}
Theorem $\ref{tm:conic}$ can lead to a conic with a rational point over $M_{\varphi}$.
For example, consider the Galois orbit \AJ{C11} of $\cite{HyperbHeun}$ 
with $M_{\varphi}=\QQ$ and the branching pattern $6+6+1+1=3\,[4]+1+1=7\,[2]$. 
After assigning a point-couple to $x=\infty$, $x=0$,
the following expression over $\QQ(i)$ can be computed:
\begin{equation}
\varphi(x)=\frac{(5x^2+998x+5-12i(x^2-1))\left(x^2-50x+1\right)^6}
% {108x\left(3x^3+39x^2-39x-3+2i(x^3+143x^2+143x+1)\right)^4}.
{108x\left(3(x-1)(x^2+14x+1)+2i(x+1)(x^2+142x+1)\right)^4}.
\end{equation}
The complex conjugation is realized by $x\mapsto 1/x$.
The obstruction conic is then $u^2+v^2=1$, which has obvious $\QQ$-rational points.
Hence \AJ{C11} has realizations over $\QQ$, for example
\begin{equation}
\frac{4\,(4x^2-2x+7)\,(5x^2-2x+8)^6}{27\,(37x^2-16x+64)\,(x^3-5x^2+4x-8)^4}.
\end{equation}
\end{remark}

If $|\OO_\varphi|>1$, the presence of symmetries means that $\varphi$ is a composition
of lower degree rational functions. In particular, we can take $\varphi=\psi\circ\lambda$,
where $\psi$ is the quotient of $\varphi$ by $\OO_\varphi$ (as a covering), 
and the degree of $\lambda$ equals $|\OO_\varphi|$. We can recursively determine
the realization fields of $\psi$.
% as $\OO_\psi$ is trivial. --->  Does not need to be trivial.   E.g. stack a few degree-2 extensions on top of each other.
The realizations fields of $\varphi$ can be decided by comparing 
the realizations of $\varphi$ and $\psi$.

There are 4 Galois orbits with $|\OO_\varphi|>1$ in \cite[Table 2]{HyperbHeun}:
D45, F6, H13, H14. They all have $|\OO_\varphi|=2$. The quadratic quotients by $\OO_\varphi$
are in \cite[Table 2]{HyperbHeun} as well: C30, F4, H12, H10, respectively.
The following two lemmas imply that D45, F6, H13, H14 have the same 
realization fields as their respective quotients by $\OO_\varphi$
(the moduli fields are the same as well). We can say that the obstruction conics
for D45, F6, H13, H14 are those of their quadratic quotients.
\begin{lemma} \label{decomp2} 
Suppose $\varphi \in K(x)$ is a Belyi function with $|\OO_{\varphi}| = 2$.
Then we can write $\varphi = \psi\circ\lambda$ for some $\lambda \in K(x)$ of degree $2$
and $\psi\in K(\lambda)$. If $\varphi$ has a realization over a field $L$, then so does $\psi$.
\end{lemma}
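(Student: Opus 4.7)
The plan is to exploit the unique non-trivial involution $\mu$ of $\OO_{\varphi}$.

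First, I would verify $\mu$ is defined over $K$. For any $\sigma \in \mathrm{Gal}(\overline{\QQ}/K)$, applying $\sigma$ coefficient-wise to $\varphi \circ \mu = \varphi$ and using $\sigma(\varphi) = \varphi$ gives $\varphi \circ \sigma(\mu) = \varphi$, so $\sigma(\mu) \in \OO_{\varphi} = \{\mathrm{id}, \mu\}$. Since $\sigma(\mu) = \mathrm{id}$ would force $\mu = \mathrm{id}$, we conclude $\sigma(\mu) = \mu$ for every $\sigma$, so the M\"obius transformation $\mu$ has coefficients in $K$.

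Second, I would apply L\"uroth's theorem to the fixed field $K(x)^{\langle\mu\rangle} \subset K(x)$. By Artin's theorem, $[K(x):K(x)^{\langle\mu\rangle}] = 2$, and L\"uroth furnishes a $\lambda \in K(x)$ with $K(x)^{\langle\mu\rangle} = K(\lambda)$; then necessarily $\deg_x \lambda = 2$. Because $\varphi$ is $\mu$-invariant, it lies in $K(\lambda)$, so $\varphi = \psi(\lambda)$ for some $\psi \in K(y)$ of degree $\deg(\varphi)/2$. This establishes the decomposition.

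Third, for the realization claim, suppose $g := \varphi \circ \alpha \in L(x)$ for some $\alpha \in \OO$, and put $\beta := \lambda \circ \alpha$, so that $g = \psi \circ \beta$. The subfield $L(\beta) \subset L(x)$ has index $\deg \beta = 2$, so L\"uroth applied over $L$ gives $L(\beta) = L(\tilde\beta)$ for some $\tilde\beta \in L(x)$. Two generators of a purely-transcendental index-$2$ subfield of $L(x)$ must differ by an $L$-rational M\"obius transformation, so $\tilde\beta = M \circ \beta$ for some $M \in \OO$ defined over $L$. Writing $g = (\psi \circ M^{-1}) \circ \tilde\beta$, the fact that $g, \tilde\beta \in L(x)$ forces $\psi \circ M^{-1} \in L(y)$; thus $M^{-1}$ witnesses an $L$-realization of $\psi$.

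The hardest part will be the descent in the third step: one must know both that $L(\beta)$ admits a generator in $L(x)$ (L\"uroth over $L$, not merely over $\overline{L}$) and that any two such generators are related by an $L$-rational M\"obius transformation. Both facts are classical, but easy to conflate with their $\overline{L}$ analogues, so attention is required. By contrast, the Galois-descent in the first step and the invariant-theoretic L\"uroth step in the second are routine once the unique involution $\mu$ has been identified.
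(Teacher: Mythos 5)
Your first two steps are fine (the paper proves the decomposition the same way, except that it writes down the generator of the fixed field explicitly as $x\mu$ or $x+\mu$ instead of invoking L\"uroth over $K$; either is acceptable). The gap is in the third step, at the very first assertion: you set $\beta=\lambda\circ\alpha$ and claim that $L(\beta)\subset L(x)$. There is no reason for $\beta$ to be defined over $L$: $\lambda$ has coefficients in $K$, $\alpha$ is an arbitrary M\"obius transformation over $\overline{\QQ}$, and $K$ need not even be contained in $L$ (in the paper's applications $K$ is typically a quadratic field such as $\QQ(\sqrt{-3})$ and $L$ ranges over realization fields that do not contain it). Knowing only $g=\psi\circ\beta\in L(x)$ does not force $\beta\in L(x)$; a Galois element $\sigma$ over $L$ only gives $\sigma(\psi)\circ\sigma(\beta)=\psi\circ\beta$, and since $\sigma$ may move $\psi$, nothing pins down $\beta$. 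Consequently the appeal to ``L\"uroth over $L$ applied to $L(\beta)$'' is circular: if $L(\beta)$ were an index-$2$ subfield of $L(x)$, then $\beta$ would already lie in $L(x)$ and L\"uroth would be unnecessary; if it is not, L\"uroth over $L$ says nothing about it. The subsequent claim that $\tilde\beta=M\circ\beta$ with $M$ defined over $L$ inherits the same problem.

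The repair is to run your steps 1--2 again, but starting from the realization $g$ over $L$ rather than transporting $\lambda$ by $\alpha$: since $g\in L(x)$ and $\OO_g=\alpha^{-1}\OO_\varphi\alpha$ has order $2$, the same Galois-invariance argument as in your first step shows its involution $\hat\mu$ lies in $L(x)$; its fixed field inside $L(x)$ has index $2$ and equals $L(\tilde\beta)$ for some $\tilde\beta\in L(x)$ of degree $2$, and $g=\hat\psi\circ\tilde\beta$ with $\hat\psi\in L(y)$. Only now compare over $\overline{\QQ}$: both $\tilde\beta$ and $\beta=\lambda\circ\alpha$ generate the fixed field of $\hat\mu$ in $\overline{\QQ}(x)$, so $\tilde\beta=M\circ\beta$ for some M\"obius $M$ over $\overline{\QQ}$ (it need not be over $L$, and that is harmless), whence $\hat\psi\circ M=\psi$ and $\hat\psi$ is an $L$-realization of $\psi$. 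This is exactly the paper's argument: it computes $\hat\lambda$ and $\hat\psi$ from the realization ``in exactly the same way'' and only then matches $\hat\lambda$ with $\lambda$ by a M\"obius transformation over $\overline{L}$. Your closing comment correctly identifies the descent as the delicate point, but the actual pitfall is not L\"uroth over $L$ versus $\overline{L}$; it is that the degree-$2$ map must be constructed over $L$ from $g$ itself, not imported from the $K$-side.
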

\begin{proof}
Let $\mu$ be the non-identity element of $\OO_{\varphi}$.  Since $\varphi$ is
invariant under ${\rm Gal}(\overline{K}/K)$, the same must be true for $\mu$, and so $\mu\in K(x)$.
Since $\mu$ has order $2$ in $\OO_{\varphi}$, its fixed field $F \subseteq K(x)$ has index 2.
At least one of the functions $x \mu$ or $x + \mu$ is not constant.
Define $\lambda \in F$ as $x \mu$ if $x \mu \not\in K$, and $x + \mu$ otherwise. 
Then $\lambda$ has degree 2, so it generates $F$.
Since $\varphi \in F=K(\lambda)$, 
it follows that $\varphi = \psi\circ\lambda$ for some $\psi \in K(\lambda)$.

If $\varphi$ has a realization $\hat{\varphi} \in L(x)$, then from
the non-identity element of \mbox{$\OO_{\hat{\varphi}}\cong\OO_\varphi$} we can compute 
explicit $\hat{\lambda}$ and $\hat{\psi}\in L(\hat{\lambda})$ in exactly the same way.
As $\overline{L}(\lambda)\cong\overline{L}(\hat{\lambda})$, 
we have $\hat{\lambda}\sim\eta\circ\lambda$ 
for some M\"obius transformation $\eta\in\overline{L}(\lambda)$. 
Then $\hat{\psi}\sim\psi$ by $\psi=\hat{\psi}\circ\eta$, 
thus $\hat{\psi}$ is a realization of $\psi$ over $L$.
\end{proof}

\begin{lemma} \label{decompb} 
Let $K$ be a field of characteristic $0$. 
Suppose $\varphi \in K(x)$ is a Belyi function with $|\OO_{\varphi}|=2$. % and \mbox{$\mbox{\rm char } K=0$}.
Let \mbox{$\varphi=\psi\circ\lambda$} be a decomposition as in the previous lemma. 
Assume that the quadratic covering $\lambda$ branches over a point-couple of $\psi$.
Then $\varphi$ and $\psi$ have the same set of realization fields.
\end{lemma}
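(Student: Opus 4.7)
My plan is to invoke Lemma \ref{decomp2} for one direction and then prove the converse: every realization field of $\psi$ is a realization field of $\varphi$. Given $\hat\psi\in L(y)$ with $\hat\psi\sim\psi$, I aim to construct an $L$-rational degree-2 covering $\hat\lambda$ whose branch locus matches that of $\lambda$; the function $\hat\varphi:=\hat\psi\circ\hat\lambda$ will then lie in $L(x)$ and be Möbius-equivalent to $\varphi$, exhibiting $L$ as a realization field of $\varphi$.

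If $\eta\in\OO$ satisfies $\psi=\hat\psi\circ\eta$, then $\eta$ sends the branch locus of $\lambda$ to a point-couple $\{\hat\alpha,\hat\beta\}$ in the target of $\hat\lambda$. By the definition of a point-couple, this unordered pair is Galois-stable over $L$, so $\hat\alpha+\hat\beta$ and $\hat\alpha\hat\beta$ lie in $L$ (after an $L$-rational Möbius adjustment of $\hat\psi$ to move $\infty$ off the couple if necessary). Replacing $\hat\psi$ by $\hat\psi\circ\bigl(y\mapsto y+(\hat\alpha+\hat\beta)/2\bigr)\in L(y)$, the point-couple becomes $\{t,-t\}$ with $t^2=(\hat\alpha-\hat\beta)^2/4\in L$. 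I would then take
$$\hat\lambda(x)=x+\frac{t^2}{4x}\in L(x),$$
whose critical points $\pm t/2$ have critical values $\pm t$, making it a degree-2 $L$-rational covering whose branch locus is precisely $\{t,-t\}$, and so $\hat\varphi:=\hat\psi\circ\hat\lambda\in L(x)$.

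To confirm $\hat\varphi\sim\varphi$, I note that $\eta\circ\lambda$ and $\hat\lambda$ are two degree-2 rational maps $\PP^1\to\PP^1$ with the same branch locus, hence differ by a Möbius transformation $\rho$ on the source: $\eta\circ\lambda=\hat\lambda\circ\rho$. Then $\varphi=\psi\circ\lambda=\hat\psi\circ\eta\circ\lambda=\hat\psi\circ\hat\lambda\circ\rho=\hat\varphi\circ\rho$, which establishes $\hat\varphi\sim\varphi$.

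The principal obstacle is producing $\hat\lambda$ in $L(x)$ when $\hat\alpha,\hat\beta$ themselves are defined only over a quadratic extension of $L$; directly mapping $\{0,\infty\}$ onto $\{\hat\alpha,\hat\beta\}$ by an $L$-Möbius is then impossible, so the naive approach $\hat\lambda(x)=\tau(x^2)$ fails. The explicit formula above succeeds because, once the couple is normalized to be symmetric about the origin, the quantity $t^2$ equals the Galois-invariant discriminant of the couple; this single $L$-rational scalar is exactly what the construction requires, and it is the assumption that $\lambda$ branches over a point-couple (rather than an arbitrary Galois-stable pair with obstructed descent) that guarantees its availability.
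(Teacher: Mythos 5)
Your proof is correct and follows essentially the same route as the paper: one direction via Lemma \ref{decomp2}, and for the converse, transporting the branch couple by $\eta$, using that it is Galois-stable over $L$ because it is a point-couple of the $L$-rational $\hat\psi$, and building an explicit $L$-rational degree-2 map with that branch locus so that the two quadratic covers differ by a M\"obius transformation on the source. The only cosmetic difference is your normalization of the couple to $\{t,-t\}$ and the map $x+t^2/(4x)$, where the paper instead writes down a general formula for $\lambda^*$ directly from the irreducible quadratic $z^2+Az+B$.
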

\begin{proof}
We only have to prove that if $\psi$ has a realization $\hat{\psi}$ over a field $L$ % (of characteristic $0$)
then $\varphi$ has a realization over $L$. Let $\eta\in\overline{L}(\lambda)$ be the M\"obius transformation
in $\psi=\hat{\psi}\circ\eta$, and let $\hat{\lambda}=\eta\circ\lambda\in\overline{L}(x)$.
Let $P_1,P_2$ be the branching fibers of $\hat{\lambda}$; 
they form a point-couple for $\hat{\psi}$ as images under $\eta$ of the assumed couple for $\psi$. 
The set $\{P_1,P_2\}$ is invariant under ${\rm Gal}(\overline{L}/L)$, because $\hat{\psi}$ is. 
We can construct an explicit $\lambda^*\in L(x)$ of degree 2 that branches above $P_1$ and $P_2$.
This is straightforward if $P_1,P_2 \in L \bigcup \{\infty\}$; otherwise $P_1,P_2$ are the 
roots of an irreducible polynomial $z^2+Az+B \in L[z]$ and we can take
\begin{equation} \label{eq:lambda}
\lambda^*(x)=c\,\frac{(c^2+Ac+B)(x+a)^2-(a-b)^2 B}{(c^2+Ac+B)(x+b)^2-(a-b)^2\,c^2}
\quad\mbox{with any } a,b,c\in L.
\end{equation}
We have $\lambda^*\sim\hat{\lambda}$, hence
$\hat{\psi}\circ\lambda^*\sim\hat{\psi}\circ\hat{\lambda}=\hat{\psi}\circ\eta\circ\lambda=
\psi\circ\lambda=\varphi$, and $\hat{\psi}\circ\lambda^*$ is defined over $L$. 
\end{proof}

\subsection{Galois cohomology obstructions}
\label{sec:obstructf}

For $\varphi \in \overline{\QQ}(x)$, 
let us denote $\Gamma_\varphi=\mbox{Gal}(\overline{\QQ}/M_\varphi)$.
For any $\sigma\in\Gamma_\varphi$
we have $|\OO_\varphi|$ choices for $\mu\in\OO$ in $\sigma(\varphi)=\varphi\circ\mu$.
If for each $\sigma\in\Gamma_\varphi$  we can choose such $\mu_\sigma\in\OO$  
so that $\mu_\sigma\circ\sigma(\mu_\rho)=\mu_{\sigma\rho}$ for any $\sigma,\rho\in\Gamma_\varphi$, 
then we have a cocycle of Galois cohomology \cite{Couveignes94,SerreGC} %\cite{Filimonenkov}
representing an element of $H^1(\Gamma_\varphi, \OO)$. 
The realization fields $L$ are then those
which are mapped to the identity in $H^1({\rm Gal}( \overline{\QQ} / L), \OO)$.
As recalled in \cite{Filimonenkov}, the elements of $H^1( \Gamma_\varphi, \OO)$
are in one-to-one correspondence with isomorphism classes of conic curves over $M_\varphi$.
This is a special case of the construction in \cite[Ch. XIV]{SerreGC}. 

\begin{example} \label{C30D45}
An example of a Galois orbit of Belyi functions without a cocycle is $\AJ{D45}$ in \cite{HyperbHeun}.
The branching pattern is $4\,[5]=4\,[4]+1+1+1+1=10\,[2]$. 
We start with the following realization in $\QQ(\sqrt{-2})$:
\begin{equation*}
%\frac{\big(x^4-10x^2+5+10\sqrt{-2}(x^2+1)\big)^4
%\big(81x^4+1330x^2-255+40\sqrt{-2}(19x^2-21)\big)}{81\,(x^4+18x^2-15)^5}.
\Phi(x)=\frac{\big(x^4-10x^2-5\big)^4\big(243x^4-190x^2+1205+110(19x^2+1)\sqrt{-2}\big)}
{\big(3x^4+18x^2+5-2(9x^2-5)\sqrt{-2}\big)^5}.
\end{equation*}
An obvious symmetry is $x\mapsto -x$. Factorization of $\Phi(x)-\Phi(y)$ shows no other symmetries,
hence $|\OO_\Phi|=2$. The conjugation of $\sqrt{-2}$ is realized by $x\mapsto\pm\sqrt{-5}/x$.
The moduli field is $\QQ$, but no cocycle over $\QQ$ can be formed.
As suggested by Lemmas $\ref{decomp2}$ and $\ref{decompb}$, we can take the quotient of \AJ{D45}
by $\OO_\Phi$. The quotient is the rational function $\Phi(\sqrt{x})$.
It is M\"obius equivalent to the \AJ{C30} function $\varphi(x)$ in $(\ref{eq:c30})$,
as one can check by finding a linear factor of $\varphi(y)-\Phi(\sqrt{x})$. 
As spelled out explicitly by Lemmas $\ref{decomp2}$ and $\ref{decompb}$, 
the realization fields of \AJ{C30} and \AJ{D45} are the same.
\end{example}

A cocycle certainly exists if $|\OO_\varphi|=1$. Other broad case with a cocycle % (perhaps trivial)
is Belyi functions $\varphi$ with a point-couple. As already explained, then we have a realization 
over a quadratic extension of $M_{\varphi}$; the quadratic conjugation is realized by an order 2 
M\"obius transformation that is (importantly) in $M_{\varphi}(x)$. 

Possible $\OO_\varphi$
and existence of cocycles for genus 0 Belyi coverings are classified in Theorem 2 of
\cite[\S 7]{Filimonenkov}. The possible $\OO_\varphi$ form the familiar list of finite subgroups 
of $PSL_2(\CC)$: the cyclic, dihedral, tetrahedral, octahedral and icosahedral groups. 
Cocycles do not exist only if $\OO_\varphi$ is a cyclic group. $M_{\varphi}$ is not a realization 
field only if  $\OO_\varphi$ is a cyclic group or Klein's (dihedral) group with 4 elements. 
There is always a realization over a quadratic extension of $M_{\varphi}$. 

% GENERALIZATION OF $\ref{decomp2}$ and $\ref{decompb}$ ???
% One conic equivalence direction: how does quadratic conjugation interact with a cyclic O_phi/
% Other conic equivalence direction: if ramification is not over a point-couple, how the conic changes?
% Or is it only the moduli field changes?
% If there is a bachelor point, order 2 affine transformation combines with a quadratic conjugation...
% 

% WHAT IS THE "OBSTRUCTION CONIC" IN CASES WITHOUT A COCYCLE?
% HOW WOULD WE RUN THE COMPUTATION OF \S 3.1 ON D45?
% HOW COULD THE MODULI FIELDS AND OBSTRUCTION CONICS 
% FOR OTHER POSSIBLE C30/D45-TYPE PAIRS RELATED? 
% QUOTIENTS BY CYCLIC GROUPS SHOULD NOT BE HARD.

\subsection{Conic models} 

Suppose that the Galois orbit of a Belyi function $\varphi$ has a cocycle 
(or more particularly, $|\OO_\varphi|=1$). If a Galois element $\sigma\in\Gamma$
is represented by a M\"obius transformation $\mu_\sigma\in\OO$, the function $\varphi$
is invariant under the joint action of $\sigma$ and $\mu_\sigma^{-1}$. 
We can find a set of generators of the invariant functions under this action,
and write $\varphi$ in terms of them. The invariant field defines an algebraic curve
over $M_\varphi$ of genus 0, isomorphic (over $M_{\varphi}$) to $\PP^1$ or a conic.

By Theorem 2 in \cite[\S 7]{Filimonenkov}, there is always a realization
over a quadratic extension $M_\varphi(\sqrt{A})$, with $A\in M_\varphi$. 
Let $\mu\in\OO$ be the cocycle representative of those Galois elements 
that conjugate $\sqrt{A}\to-\sqrt{A}$. 
Then $\mu^{-1}=\mu$ and the invariant functions are generated by two non-constant functions among
\[
\frac{x+\mu(x)}{2},\qquad \frac{x-\mu(x)}{2\sqrt{A}}, \qquad x\,\mu(x).
\]
The following special case mimics Theorem \ref{tm:conic}.
% ARE THESE TWO RESULTS EQUIVALENT? 
% CAN WE REMOVE THE CONDITION $|\OO_\varphi|=1$ IN THEOREM \ref{tm:conic},
% AS $x\mapsto B/x$ FOR $B\in M_\varphi$ IS ALWAYS A COCYCLE?
\begin{lemma} \label{lm:conic}
Suppose that we have a Belyi function $\varphi\in M_\varphi(\sqrt{A})$ where $M_\varphi$ 
is the moduli field. Suppose that there is a Galois cocycle that sends the Galois elements that
conjugate \mbox{$\sqrt{A}\to-\sqrt{A}$} to $x\mapsto B/x$ for $B\in M_\varphi$. 
Then $\varphi$ can be written as a function on the conic \mbox{$u^2=Av^2+B$,}
meaning \mbox{$\overline{\QQ}(\varphi)\cong\overline{\QQ}(u,v)/(u^2-Av^2-B)$}.
\end{lemma}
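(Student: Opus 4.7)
The plan is to realize $M_\varphi(u,v)/(u^2-Av^2-B)$ as the fixed field of the twisted Galois action on $M_\varphi(\sqrt{A})(x)$ defined by the given cocycle, using the two non-constant invariant generators recalled just before the lemma. With $\mu(x)=B/x$, set
\[
u = \frac{x+\mu(x)}{2} = \frac{x^2+B}{2x}, \qquad v = \frac{x-\mu(x)}{2\sqrt{A}} = \frac{x^2-B}{2x\sqrt{A}}.
\]
A direct expansion confirms $u^2 - A v^2 = B$, so $(u,v)$ lies on the conic.

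The second step is to show that $u$ and $v$ are fixed by the twisted action. For Galois elements fixing $\sqrt{A}$ this is automatic (taking the cocycle value to be the identity there). For the generator $\sigma$ with $\sigma(\sqrt{A}) = -\sqrt{A}$, whose twisted action is coefficient conjugation followed by precomposition with $\mu_\sigma = B/x$: the coefficients of $u$ lie in $M_\varphi$, so only $x\mapsto B/x$ acts, and $u(B/x) = u$; meanwhile $v$ is negated by $\sqrt{A}\mapsto -\sqrt{A}$ and negated again under $x\mapsto B/x$, so $v$ is returned to itself.

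By the cocycle hypothesis $\sigma(\varphi) = \varphi \circ \mu_\sigma$, the Belyi function $\varphi$ is invariant under the same twisted action, so $\varphi \in M_\varphi(u,v)$. It remains to identify $M_\varphi(u,v)$ with the full fixed field: since $x$ is a root of $t^2 - 2ut + B$ over $M_\varphi(u) \subseteq M_\varphi(u,v)$, one has $[M_\varphi(\sqrt{A})(x) : M_\varphi(u,v)] \leq 2$, and since the twisted $\mathbb{Z}/2$-action is faithful (it sends $x$ to $B/x \ne x$), Artin's theorem on fixed fields forces equality. Because $u^2 - B$ is square-free as a polynomial in $u$, the relation $u^2 - Av^2 - B$ is irreducible, so $M_\varphi(u,v)/(u^2-Av^2-B)$ is genuinely the function field of the conic over $M_\varphi$. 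Base-changing to $\overline{\QQ}$ yields the stated isomorphism, with $\varphi$ expressed as a rational function in $u,v$ modulo the conic relation.

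The main obstacle is the degree accounting in this last step: one must rule out the possibility that $M_\varphi(u,v)$ is a proper subfield of the invariant field, which would happen if the tower $M_\varphi(u) \subset M_\varphi(u,v) \subset M_\varphi(\sqrt{A})(x)$ collapsed. Faithfulness of the twisted action on $x$ together with the elementary bound from $t^2-2ut+B$ resolves this; apart from that the argument is direct substitution, and the other paragraphs are routine verifications.
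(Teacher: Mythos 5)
Your proposal is correct and follows essentially the same route as the paper: both introduce the invariants $u=\tfrac12(x+B/x)$, $v=\tfrac1{2\sqrt{A}}(x-B/x)$, check their invariance under the joint Galois and $x\mapsto B/x$ action, conclude $\varphi\in M_\varphi(u,v)$, and record the relation $u^2=Av^2+B$. You merely spell out in more detail (via $t^2-2ut+B$ and Artin's theorem, noting also that $\sqrt{A}=(x-B/x)/(2v)$ is recovered upon adjoining $x$) the paper's terse assertion that $u,v$ generate the full invariant field because they determine the orbits $\{u\pm\sqrt{A}v\}$.
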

\begin{proof}
The functions
\begin{equation} \label{eq:conicp}
u=\frac12\left(x+\frac{B}x\right), \qquad 
v=\frac1{2\sqrt{A}}\left(x-\frac{B}x\right).
\end{equation}
generate the invariants under the joint Galois and $\mu$ action,
since they determine the orbits $\{u\pm\sqrt{A}v\}$. 
Hence $\varphi\in M_{\varphi}(u,v)$. The generating invariants are related by $u^2=Av^2+B$.
\end{proof}

We say that a genus 0 Belyi function $\varphi$ has a {\em conic model} if
it can be written as a function on the obstruction conic (over $M_\varphi$) with the same dessin.
Most often, conic models offer a compact expression of the Belyi function,
as demonstrated on the examples of C6, C30, F11 in \cite{HyperbHeun}. 
Conic models of C30 can be obtained by specializing $x\in\QQ$ in (\ref{eq:C30uvx}),
by composing the conic model and a parametrization of the conic.

% We say that a Belyi function $\varphi$ has a conic-model if there exist a conic
% $C$ defined over $M_{\varphi}$ such that the function field of $C$ over $M_{\varphi}$ has a
% Belyi function $f: C \rightarrow P^1$ with the same dessin as $\varphi$.

% ADD A COUPLE OF MORE EXAMPLES, LIKE B12, H1?

\begin{remark}
Belyi functions without a cocycle do not have conic models. In particular,
one can compute an expression like $(\ref{eq:C30uvx})$ on $u^2+3v^2+5=0$ for \AJ{D45} 
by composing $(\ref{eq:C30uvx})$ with a specialized version (say, $a=2$, $b=0$, $c=-1$)
of (\ref{eq:lambda}); there $x^2+Ax+B$ is proportional to the long quadratic polynomial
in the numerator of $(\ref{eq:C30uvx})$. But if we specialize $x$ in the obtained expression,
we obtain a dessin for \AJ{C30}, not \AJ{D45}. 
The quadratic covering between \AJ{D45} and \AJ{C30} composes with a 
parametrization of the conic, not with a conic model. 
In fact $\cite{Macrae}$, a conic defined 
over a field $K$ without a $K$-rational point does not have quadratic coverings defined over $K$.
% (in fact, \cite[Section 4]{DessinsFromAGeometricPointOfView} explains why
% no element of $\RR(u)[v]/(u^2+v^2+1)$ can have D45 or F6 as a dessin).
\end{remark}
\begin{remark}
A conic over $M_{\varphi}$ with a point defined over an odd-degree extension
of $M_{\varphi}$ will necessarily have a point over $M_{\varphi}$; see also \cite[\S 8]{Filimonenkov}.
%
% IN \cite[\S 8]{Filimonenkov}, THIS FACT WITH ODD-SIZE GROUPS OF POINTS IS PROVED
% UNDER THE ASSUMPTION OF A COCYCLE. DO WE NEED THAT ASSUMPTION?
%
The example of Remark $\ref{rm:ratconic}$ is bound to have a model
over the moduli field because of the odd size groups $3\,[4]$, $7\,[2]$ in the branching pattern. 
An obstruction can only occur for branching patterns where in each branching index
appears an even number of times in each partition.  
% REALLY?
Accordingly, the entries of \cite[Table~2]{HyperbHeun} have only couples or even-size 
groups of points with the same branching order.
\end{remark}

Conic models for low degree Belyi functions can also be found from scratch.
Consider, for example, the Galois orbit F1 in \cite{HyperbHeun}, with the branching pattern
$4+4=3+3+1+1=3+3+1+1$. 
The function will have the shape $\varphi=u^3L_0$, $1-\varphi=v^3L_1$
on yet to be determined conic in $u,v$ (not necessarily in the canonical form with 3 terms),
where $L_0,L_1$ are linear in $u,v$. The expression $1-u^3L_0-v^3L_1$
would have then a quadratic factor, giving the conic.  Without loss of generality,
we can multiply two quadratic expressions $Q_1Q_2$ with undetermined coefficients.
In the product, the coefficients to $u^2v^2$, $u^2v$, $uv^2$ and to the terms of degree 1, 2 must vanish.
That gives enough restrictions to determine the possibilities. 
One of obtained\footnote{Up to scaling, we get two quartic expressions. The other one is
\[
1-u^3(u-4v)-4v^3(v+2u)=(1+u^2-2uv-2v^2)(1-u^2+2uv+2v^2).
\]
Both factors lead to D7, even if the first one gives a conic with no $\QQ$-rational point.}
factorable quartics is 
\[
1-u^3(u-8v+4)-v^3(v-8u+4).
\]
It factors over $\QQ(\sqrt{-2})$. A conic expression for F1 is $\varphi=u^3(u-8v+4)$,
where $u,v$ are related by one of the quadratic factors. The conic equation could be transformed
to $u^2+3v^2+\sqrt{2}-1=0$ with some work. Conic parametrizations for C30 and F11 could be found
in a similar way because of many pairs of the same branching orders, though then degree 5 or 6 
expressions are assumed to have a quadratic factor. In this way, F11 can be expressed as 
$u^5(1-u-v)/v$ on a complicated conic over $\QQ(\sqrt{5})$.

\subsection{Computations of conics} 

A conic over a number field $K$ can be characterized in several ways: 
in terms of bad primes, skew fields, or Galois cohomology. These descriptions
help to identify isomorphic (over $K$) conics, to compute an isomorphism or
a simpler conic equation.

Birational isomorphism of conics is conveniently decided by a set of {\em bad primes}. 
The bad primes are precisely those for which $\varphi$ has no realization over 
the completion of $M_{\varphi}$ at $\mathfrak{p}$.
The completion at real primes $\mathfrak{p} = \infty$ is isomorphic to $\RR$. 
The number of bad primes is always even. % \cite{ConicPrimes}. 
The obstruction on realization fields can be described by the set of bad primes
without a reference to the conic. 

For example, the conic $u^2 + 3 v^2 + 5=0$ obtained in \S \ref{SectionC30}
for C30 is isomorphic to the conic given by $u^2+2v^2+5=0$ (evident from the realization
$\Psi(\sqrt{x})$ of Example \ref{C30D45}).
Both conics have the same set of bad primes over $\QQ$: 2 and $\infty$. 
A projective isomorphism is $(u:v:1)\mapsto \big(\frac12(u-5):v:\frac12(u+1)\big)$. 

Isomorphism of conics is best computed using the skew fields characterization.
We developed such an implementation \cite[\sf ConicIsom.mpl]{HeunURL} % \cite{URLconic} 
to find birational maps for conics over $\QQ$. 
For the cases like F1, F4, H10-H14 over extensions of $\QQ$, 
we followed the same method  doing case by case computations. 

%MORE DETAILS??

% Finding a birational map between a conic with large coefficients and a birationally equivalent conic with
% smaller coefficients (the ones in Table~\ref{conic}).

\section{Additional algorithms}
\label{OtherAlgorithms}

Beside determination of moduli fields and obstruction conics,
computation of Belyi functions quickly leads to several simplification problems
as obtained first expressions are unruly. And then we wish to compute
the dessins and possible decompositions of computed Belyi maps.

Section 6.3 in \cite{HyperbHeun} gives a list of side problems were encountered in the course
of handling minus-4-hyperbolic Belyi functions. Here we describe our algorithmic solutions
to most of those problems.

% We also used numerous programs that are non-trivial but costed us little time
% since they were already implemented by others.  Notably: Gr\"obner bases, factoring polynomials,
% algorithms for % algorithms for computing with permutation groups, the polredabs command in GP/PARI to find small with permutation groups, the polredabs command in GP/PARI to find small
% polynomials for representing number fields mentioned in item~1 below.

\subsection{Simplification of number fields}
\label{sec:nfields}

Simplification of a definition field $K$ can be done with the {\sf polred} and {\sf polredabs} commands from {\sf GP/PARI}.
It utilizes integral basis and LLL lattice reduction.
But we had to work around a problem.
In all examples we encountered, the integral basis can be computed
without factoring large integers.
It appears that {\sf GP/PARI} is able to avoid unnecessary large integer factorization
in most cases, but it did get stuck on some cases. To cover those,
we developed our own integral basis and polred implementation.
% compute the field discriminant (and consequently, an integral basis)
% by taking a square of large unfactored tail-integers, since (in practice)
% their factors never occur in the field discriminant.

Relatively comfortable realizations of $K$ are obtained by recognizing towers of number field
extensions. The smallest LLL vectors may give non-reduced polynomials
that do not actually define $K$. But instead of discarding them, we note
that those polynomials define subfields of $K$. When $K$ is not a moduli field,
we automatically have the moduli field as a subfield.

Various cubic fields are encountered frequently.
A straightforward simplification of cubic $K$ is obtained by
trying to simplify the radical expression in Cartan's formula.
A root of $X^3-3aX-2b=0$ is $B^{1/3}+a/B^{1/3}$, 
% \[ x=\left(b+\sqrt{b^2-a^3}\right)^{1/3}+\frac{a}{\left(b+\sqrt{b^2-a^3}\right)^{1/3}},
% \]
where $B=b+\sqrt{b^2-a^3}$.
% though the discriminant is $-108(b^2-a^3)$. 
If $\QQ(\sqrt{b^2-a^3})$ is a principal ideal domain,
then $B$ can be factorized using {\sf Maple}'s {\sf numtheory[factorEQ]}.
Taking the cube-free part of the factorization generally leads to smaller $a$, $b$
in the cubic minimal polynomial of the same shape.

This ready factorization in quadratic principal ideal domains is useful
in subsequent simplification of a Belyi function by the scalings $x\mapsto \alpha x$.
If $K$ is not quadratic or a principal ideal domain, but scaling simplification 
of a particular example is desirable, investigation of $K$-primes is necessary.
We recommend to find out which $\QQ$-primes are likely to play a role, 
and factorize the principal $K$-ideals whose norms involve those primes.
Simplification by the units $\alpha\in K$ should not be forgotten either.
For example, the field $\QQ(\xi)/(\xi^3+2\xi^2+6\xi-8)$ of \S \ref{ex:deg54}
has discriminant  $-980$. An integral basis is $1,\xi,\xi^2/2$. 
The units are generated by $1-\xi$.  The class number appears to be 3. 
Here are some principal ideals that factor into ramified $2_R$, $5_R$ and unramified $2_U,5_U$ primes: 
$(\xi)=2_R2_U^2$, $(\xi^2/2)=2_U^3$, $(1-\xi-\xi^2/2)=2_R^3$,  $(1+\xi^2)=5_R5_U^2$,
$(3-4\xi)=5_U^3$, $(1+\xi-\xi^2)=5_R^3$, and of course $(2)=2_R^22_U$, $(5)=5_R^25_U$.
The prime 7 is totally ramified.

\subsection{Simplification of Belyi functions}
\label{sec:simplify}

Given $\varphi \in K(x)$, 
there usually exists a M\"obius-equivalent $\tilde{\varphi}$ of substantially
smaller bitsize than the one that was obtained initially.
We have a collection of algorithms to find such $\tilde{\varphi}$.
We sketch a few. 

To simplify by the scalings $x\mapsto \alpha x$, we consider a polynomials component
$f = a_n x^n + \cdots a_0 x^0$. 
We multiply $x$ by the primes that appear in $a_n, a_0$ to see if that makes $\varphi$ smaller.
This is easy to implement (one prime at a time) when $K = \QQ$, but when $K$ is a number field, we have to multiply prime
ideals.  %  (trying several multiplicities to see which one works best).
If $I$ is the product, then use LLL techniques (similar to {\sf polred}) to find a good element $\alpha \in I$
(use a dot-product where short vectors correspond to $\alpha$'s for which $x\mapsto \alpha x$ is likely to reduce the bitsize).
One can try several products $I$, and for each, try several dot-products (in order to deal with the likely possibility
that multiplying $\alpha$ by a suitable unit reduces the bitsize of $\varphi$).

Scaling is generally most effective at the latest stage. 
For example, we consequently move each bachelor point to $x=\infty$, 
then select a component $f = a_n x^n + \cdots a_0 x^0$, 
clear its second highest term with $x \mapsto x - a_{n-1}/(n a_n)$, and then apply scaling.
If some of the polynomial components have linear factors over $K$,
their $K$-roots can also moved to $x=\infty$.

Suppose $\varphi \in K(x)$ and $S = \varphi^{-1}( \{0,1,\infty\} )$.  
If some $\alpha \in S$ has a minimal polynomial $f \in K[x]$ of degree 3,
we can apply a method of \S \ref{sec:nfields} ({\sf polredabs} or simplification of Cardano's radicals)
to find an optimized polynomial $g$, then compute a M\"obius transformation over $K$
that will send $\alpha$ to a root of $g$, and check if it makes $\varphi$ smaller.
A similar trick works if we have three $\QQ$-points of degree 1,  or one of degree 1 and one of degree 2.

If some $\alpha \in S$ has a minimal polynomial $f \in K[x]$ of degree 4,
we can compute several small polynomials $g$ defining the same quartic field,
and check the $j$-invariants of their roots. If the $j$-invariant for some $g$
coincides with the $j$-invariant of $f$, then a root of $g$ can be sent to a root of $f$
by a M\"obius transformation in $K(x)$. For example, the polynomial $P$ of \S \ref{ex:deg54}
factors into a linear polynomial and a degree 4 polynomial $P'$. 
The $j$-invariant of the 4 roots of $P'$ equals $j(P')=64(19\xi^2+16\xi+32)/3$. 
We eliminate the field generator $\xi$ by computing the resultant
of $P'$ and the field polynomial $\xi^3+2\xi^2+6\xi-8$. The obtained field $L$ has degree 12 over $\QQ$.
We look at the polynomials $g$ corresponding to the smallest LLL-vectors within {\sf polredabs}.
Each $g$ factors over $\QQ(\xi)$, giving a degree 4 factor $F$. 
A M\"obius identification of $P'$ and $F$ is possible only if
the $j$-invariant of the 4 roots of $F$ equals $j(P')$. The simplest degree 12 polynomial
(corresponding to the first vector in the output LLL basis) 
does not lead to the right $j$-invariant. But luckily, the whole LLL basis contain
even two vectors leading to the right $j$-invariant. 
One of them leads to (\ref{eq:deg15}). In general, we would need to search
short LLL-lattice vectors until the right $j$-invariant is found. Possibly, this search is
typically short in our computational context of Belyi functions.

\subsection{Computation of dessins}

Given a Belyi map in $f \in K(x)$ and an embedding $K \rightarrow \CC$,
we wish to compute the dessin d'enfant (in the combinatorial form)
of the image of $f$ under this embedding. The combinatorial data is given
by three permutations $(g_1,g_0,g_{\infty})$ with $g_i \in S_d$, 
satisfying $g_0 g_1 g_{\infty} = 1$ and the generated group 
$<\hspace{-3pt}g_0,g_1\hspace{-3pt}>$ acting transitively on $S_d$.
The permutations give the monodromy action of $f$.
Although algorithms for computing the monodromy 
exist \cite{Verschelde} %\cite{Rybowicz, Verschelde, Deconinck}, 
this was still a considerable amount of work because we had to develop our own implementation,
specifically optimized for rational functions of high degree that ramify over only 3 points.
We used Puiseux series around $x=0$ and $x=1$, and evaluated them at $x=1/2$.  To correctly match the
Puiseux expansions at $x=0$ to those at $x=1$, we compute a large number of terms, but do this at
a finite precision (i.e. floating point).  We preprocess $f$ with a M\"obius transformation
(if the distance from $f(\infty)$ to 0 or 1 is more than 0 but less than $1/2$, then not every expansion converges).
% The performance (in time) depends on the permutation of $x=0$, $x=1$, $x=\infty$.  

To draw the numerous dessins in \cite{HyperbHeun}, we developed a script language
that utilizes (in particular) possible symmetries.
The script commands are run on {\sf Maple}; they are interpreted
via simple {\sf Maple} routines as printing {\sf Latex}'s code for the {\sf picture} environment.

A useful routine is to recognize whether two monodromies $(g_1,g_0,g_{\infty})$ and 
$(\tilde{g}_1,\tilde{g}_0,\tilde{g}_{\infty})$
represent the same dessin. That is, decide whether there is  
$h \in S_d$ such that $h^{-1} g_i h = \tilde{g}_i$ for $i \in \{0,1,\infty\}$.
A solution: suppose that $h(1) = b$ for some yet to be determined $b \in \{1,\ldots,d\}$.
Then $h(g_0^{n_1} g_1^{n_2} \cdots  g_1^{n_k}(1)) = 
\tilde{g}_0^{n_1} \tilde{g}_1^{n_2} \cdots  \tilde{g}_1^{n_k}(b)$ for all $n_1,\ldots,n_k$.
That determines $h$ because $<\hspace{-3pt}g_0,g_1\hspace{-3pt}>$ acts transitively.
So we can find $h$ (if it exists) by checking $d$ cases,  $b=1,2,\ldots,d$.

\subsection{Computation of decompositions}

Decomposition of a Belyi function $\varphi(x)$ into smaller degree rational functions
is decided by the function field lattice between $\CC(x)$ and $\CC(\varphi)$,
as described in \cite[\S 1.7.2]{LandoZvonkin}.    
If $\overline{\QQ}(\varphi) \subseteq L \subseteq \overline{\QQ}(x)$ is a subfield, 
then $L = \overline{\QQ}(g)$ for some $g$ by L\"uroth's theorem, and $\varphi = h(g)$ for some Belyi map $h$. 

The subfield lattice can be computed using the dessins $(g_1,g_0,g_{\infty})$.
 For this, we compute the subgroups $H$ of the monodromy group $G := <\hspace{-3pt}g_0,g_1\hspace{-3pt}>$ 
that contain $\{ g \in G | g(1)=1 \}$.   
Given such $H$,  writing down the action of $g_1$, $g_0$, $g_{\infty}$ on the
cosets of $H$ produces the dessin of the subfield corresponding to $H$.
We then identified the component Belyi maps $h$ % (such as $h$ of item~(4))
(corresponding to the field $L$) by using the full list of classified 
hypergeometric and hypergeometric-to-Heun transformations
in \cite{VidunasFE,HeunForm} and here.
This way we obtained all decompositions of all entries of A/J tables in \cite[Appendix B]{HyperbHeun}.
The detailed decomposition lattices are given in
\cite[\sf Decomposition\_or\_GaloisGroup]{HeunURL}, together with the used notation.
% See also \cite[Section~1.7.2]{LandoZvonkin}. 
%% That's: Graphs on Surfaces and their Applications.  S.K. Lando and A.K Zvonkin

\section{Symbolic application of differential identities}

Here we derive some useful consequences of the logarithmic derivative ansatz 
and Lemma \ref{lm:gpback}. A few known cases are known  \cite[\S 2.5.2]{LandoZvonkin} 
of occurrence of Chebyshev and Jacobi polynomials as parts of Belyi functions.
In \S\S \ref{ex:chebyshev}, \ref{sec:dihedral} we demonstrate how these
cases naturally follow from the methods of % \S \ref{sec:logdiff} and \S \ref{sec:pullback}
\S \ref{ProgramV}, and immediately derive a few similar occurrences of Jacobi polynomials.
Section \ref{sec:davenport} derives an interesting non-linear differential relation for %computing 
Davenport-Stothers triples. 
Because methods of \S \ref{sec:logdiff}, \S \ref{sec:pullback} 
determine requisite polynomials up to a constant multiple, 
we will use the symbol $\cong$ to mean ``equal up to a constant multiple".

\subsection{Chebyshev polynomials} 
\label{ex:chebyshev}

It is well known that Belyi functions with linear dessins d'enfant like
\begin{equation}
\begin{picture}(130,10)(-5,3)
\put(0,6){\circle*3} \put(15,6){\circle3} \put(30,6){\circle*3} \put(45,6){\circle3}
\put(60,6){\circle*3} \put(75,6){\circle3} \put(90,6){\circle*3} \put(105,6){\circle3} 
\put(0,6){\line(1,0){14}} \put(16,6){\line(1,0){28}} \put(46,6){\line(1,0){28}}  \put(76,6){\line(1,0){28}}
\put(120,6){\circle*3} \put(106,6){\line(1,0){14}}
\end{picture}
\end{equation}
are given by the Chebyshev polynomials of the first and second kind:
\begin{equation} \label{eq:chebysh}
T_n(x)=\cos(n\arccos x),\qquad U_n(x)=\frac{\sin(n\arccos x)}{\sin x}.
\end{equation}
This appearance of Chebyshev polynomials is established rather ad hoc.
It is explained by the logarithmic derivative ansatz as follows.
Let the end black points be $x=0$ and $x=1$.
The white points are roots of a degree $n$ monic polynomial $F$, 
and the interior black points are roots of a degree $n-1$ monic polynomial $G$. 
The Belyi function $\varphi(x)$ is a polynomial of even degree $2n$. We have
\begin{equation} \label{eqt:direct}
\varphi(x)=c_0\,F^2, \qquad \varphi(x)-1={c_0}\,x\,(x-1)\,G^2,
\end{equation}
for some constant $c_0$. The logarithmic derivative ansatz gives
\begin{equation}
2n\,G=2\,F',\qquad 2n\,F=(2x-1)\,G+2\,x\,(x-1)\,G'.
\end{equation}
Elimination of $G$ gives the hypergeometric equation for
\begin{equation}
F\cong \hpg21{-n,\,n}{1/2}{\,x}=T_n(1-2x),
\end{equation}
while elimination of $F$ gives the hypergeometric equation for
\begin{equation}
G\cong \hpg21{1-n,\,1+n}{3/2}{\,x} \cong U_{n-1}(1-2x). %=\frac1n\,U_{n-1}(1-2x).
\end{equation}
Up to constant multiples, 
two hypergeometric polynomials have to be identified as $F$, $G$, respectively.
In fact, we have $T_n(x)^2+(1-x^2)U_n(x)^2=1$ by the trigonometric definitions
$(\ref{eq:chebysh})$.

The similar dessin d'enfant
\begin{equation}
\begin{picture}(115,10)(-5,3)
\put(0,6){\circle*3} \put(15,6){\circle3} \put(30,6){\circle*3} \put(45,6){\circle3}
\put(60,6){\circle*3} \put(75,6){\circle3} \put(90,6){\circle*3} \put(105,6){\circle3}
\put(0,6){\line(1,0){14}} \put(16,6){\line(1,0){28}} \put(46,6){\line(1,0){28}}  \put(76,6){\line(1,0){28}} 
\end{picture}
\end{equation}
defines a Belyi function of odd degree $2n+1$ as follows:
\begin{equation} \label{eqt:directAGAIN}
\varphi=c_1\,x\,F^2, \qquad \varphi-1={c_1}\,(x-1)\,G^2,
\end{equation}
for $F,G$ monic polynomials of degree $n$, and some constant $c_1$.
The logarithmic derivative ansatz gives
\begin{equation}
(2n+1)\,G=F+2\,x\,F',\qquad (2n+1)\,F=G+2\,(x-1)\,G'.
\end{equation}
Elimination of $G$ gives the hypergeometric equation for 
\begin{equation}
F\cong \hpg21{-n,\,n+1}{3/2}{x}=\frac{(-1)^n}{n+\frac12}\;\hpg21{-n,\,n+1}{1/2}{1-x}.
\end{equation}
Up to a constant multiple, % $(3/2)_n/n!=(2n+1)!/4^n(n!)^2$ 
this is the Jacobi polynomial $P_n^{(1/2,-1/2)}(1-2x)$. 
In $\cite[pg. 243]{specfaar}$, these polynomials are identified as
\begin{equation}
V_n(x)\cong \frac{\sin\,(n+\frac12)\arccos x}{\sin\frac12\arccos x},
\end{equation}
and called Chebyshev polynomials of the third kind.

\subsection{Davenport-Stothers triples} 
\label{sec:davenport}

An interesting arithmetic problem is to find large co-prime integers $f,g$ such that 
the difference $f^3-g^2$ is small \cite{hall71}. An analogous question for polynomials in $\CC[x]$ is:
given a polynomial $F$ of degree $2n$ and a co-prime polynomial $G$ of degree $3n$,
how small can the degree of $H=F^3-G^2$ be? The answer is $n+1$, as proved 
by Davenport \cite{davenport65} and Stothers \cite{stothers81}. %\cite{mason84}  
The minimal value is achieved exactly when $\varphi=F^3/H$ is a Belyi function.
These results can be proved by applying the Hurwitz formula 
to the genus 0 covering $\varphi$.

The triples $(F,G,H)$ with the sharp $\deg H=n+1$ 
are called {\em Davenport-Stothers triples}. %\cite{shioda08},  \cite[\S 2.5.1]{LandoZvonkin}
The point $x=\infty$ has then the branching order $5n-1$. 
The logarithmic derivative ansatz gives the relations
\begin{eqnarray*}
(5n-1)G=3F'H-FH',\qquad (5n-1)F^2=2G'H-GH'.
\end{eqnarray*}
Elimination of $G$ gives
\begin{equation} \label{eq:dselimr}
(5n-1)^2F^2=6F''H^2+F'H'H-2FH''H+FH'^2.
\end{equation}
This formula can be rewritten as
\begin{equation} \label{eq:uqu}
\frac{(5n-1)^2\,F-H'^2}{H}=\frac{6F''H+F'H'-2FH''}{F}.
\end{equation}
Since $F$, $H$ are co-prime, this rational function must be a polynomial.
Let $Z$ denote this polynomial. It has degree $n-1$, 
and the leading coefficient is equal to $12n(2n-1)$.
We have two expressions for $Z$ in (\ref{eq:uqu}).
One of them implies $F\cong H\,Z+H'^2$,
the other is homogeneous in $F$ and its derivatives.
Elimination of $F$ gives %\footnote{Alternatively, the eliminations of $G$, $F$ can be substituted
% into $F^3-G^2=cH$, with the constant $c$ because we assume $H$ is monic. The consequence is that
% \begin{align*} -H\left(6H'H''+2H'Z+3HZ'\right)^2+(12H''+7Z)H'{}^4 
% +6HH'{}^3Z'+3HH'{}^2Z^2+H^2Z^3  
% \end{align*} is equal to the constant $(5n-1)^6c$.
% Differentiation of this expression gives the product of  (\ref{eq:dshz}) with
% $6HH'H''-H'{}^3+2HH'Z+3H^2Z'.$
% In a sense, the long expression is an ``integral" of (\ref{eq:dshz}).
% The extra factor (with 4 terms) represents parasitic solutions only.}
an equation independent of $n$:
\begin{equation} \label{eq:dshz}
H'H'''+H''^2+\frac{H''Z}3+\frac{13}{12}\,H'Z'+\frac{HZ''}{2}=\frac{Z^2}{12}.
\end{equation}

Instead of looking for the polynomials $F$, $G$ of degree $2n$, $3n$, 
we could look for polynomials $H$, $Z$ of degree $n\pm1$ satisfying 
(\ref{eq:dshz}) % or ( \ref{eq:dshz2}).
For comparison, extensive computations in \cite{ElkiesWatkins}
reduce the problem of finding Davenport-Stothers triples to looking for polynomials
$A,B,C$ with $F=A^2+B$, $G=A^3+3AB/2+C$, $(\deg A,\deg B,\deg C)=(n,n-1,n-2)$,
$\deg (3B^2-8AC)=n-3$, etc. 
It would be useful
%\footnote{The pull-back transformation of \S \ref{sec:pullback} 
%not give anything new. The starting equation is $\hpgde{1/2,1/3,\alpha}$.
%After the pull-back, the local exponents at the roots of $U$ are 0 and $\alpha$,
%and at $x=\infty$ are $n/2-3n\alpha$ and $n/2+(2n-1)\alpha$.
%The pulled-back equation is
%\[ %begin{equation}
%\frac{d^2Y(x)}{dx^2}+\frac{(1-\alpha)H'}{H}\frac{dY(x)}{dx}
%+\frac{1-6\alpha}{12}\left(\frac{(1+6\alpha)\,Z}{12\,H}+\frac{H''}{H}\right)Y(x)=0.
%\] %end{equation}
%}
to get differential relations for $A,B,C$.

\subsection{Cyclic monodromy and Jacobi polynomials}
\label{sec:dihedral}

Jacobi polynomials \cite{specfaar} are classical 
orthogonal polynomials\footnote{Orthogonality
of Jacobi polynomials $P_{x}^{(\alpha,\beta)}(x)$ is properly defined only when 
$\alpha>-1$, $\beta>-1$. In the cases considered here, these inequalities are
routinely not satisfied. Therefore orthogonality considerations do not apply here.
But usefully, the considered Jacobi polynomials have zeroes outside the real line.} 
on $[-1,1]\subset\RR$.  They are defined by the hypergeometric expression
\begin{equation} \label{eq:jacobi}
P_n^{(\alpha,\beta)}(x)=\frac{(1+\alpha)_n}{n!}\,
\hpg{2}{1}{-n,\,n+1+\alpha+\beta}{1+\alpha}{\,\frac{1-x}2\,}.
\end{equation}
By adjusting the two parameters, any $\hpgo21$ polynomial can be considered as
a Jacobi polynomial. In particular, transformations \cite[\S 4]{degeneratehpg} 
of hypergeometric polynomials imply
\begin{align*}
P^{(\alpha,\beta)}_n(1-2x) =
(-1)^n\,P^{(\beta,\alpha)}_n(2x-1) 
= (1-x)^n\,P^{(\alpha,-2n-1-\alpha-\beta)}_n\!\left(\frac{1+x}{1-x}\right).
\end{align*}
Incidentally, $\alpha,\beta,-2n-1-\alpha-\beta$ are the local exponent differences 
(at $x=0$, $x=1$, $x=\infty$, respectively) of
the hypergeometric equation for $P^{(\alpha,\beta)}_n(1-2x)$. 

Consider the {\em double flower} dessin in Figure \ref{fg:jacobi}~\refpart{a},
with any number $k\ge 0$, $\ell\ge 0$ of petals at the ends, and any number $N$
of intervals on the stalk. It was observed by Magot \cite{Magot97} (see also \cite[\S 2.5.2]{LandoZvonkin})
that the Belyi function $\varphi(x)$ for this dessin d'enfant is expressed in terms of Jacobi polynomials.
If we put the blossoms at $x=0$ and $x=\infty$,
then $\varphi(x)=x^{2k+1}\,\Theta_2(x)^2/\Theta_1(x)^2$ with 
\begin{align*}
\Theta_1(x)&=(1-x)^{N+k+\ell}\,P_{N+k+\ell}^{(-k-1/2,-\ell-1/2)}\!\left(\frac{1+x}{1-x}\right), \\
% =P_{(n+k+\ell)/2}^{(-k-1/2,-n)}(1-2x),\\
\Theta_2(x)&= (1-x)^{N-1}\,P_{N-1}^{(k+1/2,\,\ell+1/2)}\!\left(\frac{1+x}{1-x}\right).
% = P_{(n-k-\ell)/2-1}^{(k+1/2,-n)}(1-2x).
\end{align*}
The hypergeometric equation for $\Theta_1(x)$ and $x^{k+1/2}\,\Theta_2(x)$ is
$E(k+1/2,\ell+1/2,n)$, where $n=2N+k+\ell$ so that $N=(n-k-\ell)/2$.  
The point $x=1$ is a branching point of order $n$. We could also write
\begin{align*}
\Theta_1(x)=P_{N+k+\ell}^{(-k-1/2,-n)}(1-2x),\qquad
\Theta_2(x)= P_{N-1}^{(k+1/2,-n)}(1-2x).
\end{align*}
The occurrence of Jacobi polynomials can be explained as follows.
The branching pattern of the double flower dessin implies that $\varphi$ 
transforms $E(1/2,1/2,1)$ to $E(k+1/2,\ell+1/2,n)$. The monodromy of both 
hypergeometric equations is $\cong\ZZ/2\ZZ$. The pull-back covering $\varphi$ 
is actually $s_0^{-1}\circ s_1$, where $s_0,s_1$ are corresponding 
Schwarz maps \cite{Wikipedia}  for $E(1/2,1/2,1)$, $E(k+1/2,\ell+1/2,n)$, respectively. 
We can take $s_0=\sqrt{x}$,
then $s_1$ is (up to a constant multiple) a quotient of two hypergeometric solutions 
of $E(k+1/2,\ell+1/2,n)$. The hypergeometric solutions can be written as Jacobi polynomials,
and $\varphi=s_1^2$. The degree of the $\varphi$ equals $n+k+\ell$.

A pull-back from $E(1/2,1/2,1)$ to $E(k+1/2,\ell+1/2,n)$ with odd $n+k+\ell$ can be considered as well.
Then we have the same expression $\varphi(x)=x^{2k+1}\,\Theta_2(x)^2/\Theta_1(x)^2$ with
\begin{align*}
\Theta_1(x)&\cong P_{N'+k}^{(-k-1/2,-n)}(1-2x)
= (1-x)^{N'+k}\,P_{N'+k}^{(-k-1/2,\ell+1/2)}\!\left(\frac{1+x}{1-x}\right),\\
\Theta_2(x)&\cong P_{N'+\ell}^{(k+1/2,-n)}(1-2x)
= (1-x)^{N'+\ell}\,P_{N'+\ell}^{(k+1/2,-\ell-1/2)}\!\left(\frac{1+x}{1-x}\right).
\end{align*}
Here $N'=(n-k-\ell-1)/2$. If $N'\ge 0$, the dessin is 
depicted\footnote{We use the same dessin plotting convention as in \cite{HyperbHeun}.
White points of order 2 are not depicted, but the edge going through them is drawn thick.}
in Figure \ref{fg:jacobi}~\refpart{b}. But a dessin is possible for $N'<0$ as well,
as depicted in Figure \ref{fg:jacobi}~\refpart{c} with $M'=1-2N'$, $K=k+N'$, $L=\ell+N'$.
The positive integers $k,\ell,n$ satisfy the triangle inequalities $n<k+\ell$, $k<\ell+n$,
$\ell<k+n$, and $M'$ is odd then. Figure \ref{fg:jacobi}~\refpart{c} 
is valid with $M'\in 2\ZZ$ as well, but then the % respective
pull-back is to a hypergeometric equation with trivial monodromy.

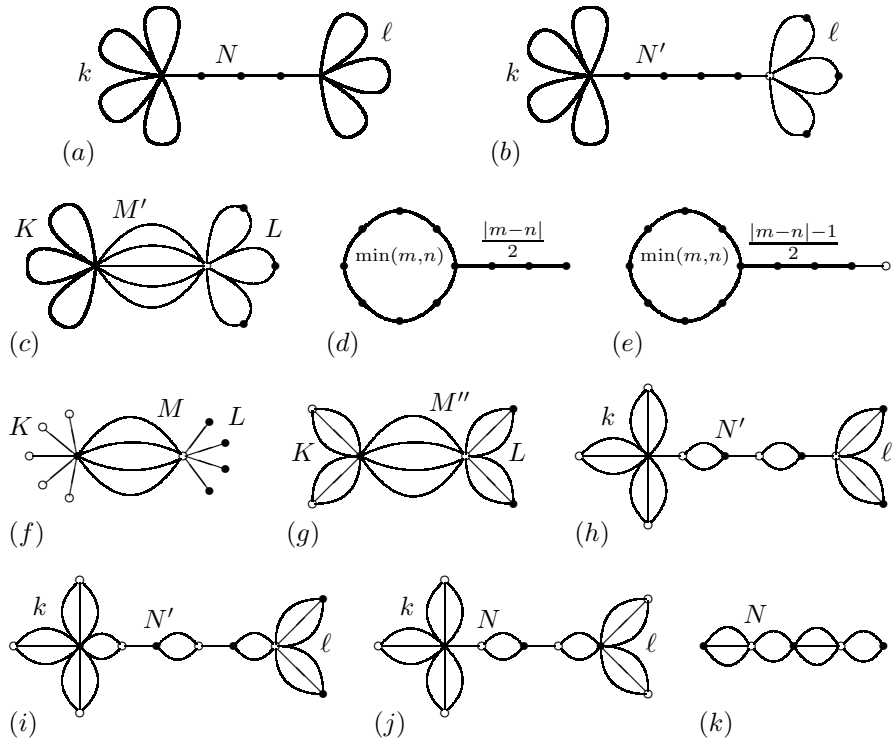
\begin{figure}\begin{picture}(320,280) \thicklines
\put(24,220){$(a)$} \qbezier(62,252)(75,278)(62,278) \qbezier(62,252)(49,278)(62,278) \qbezier(62,252)(75,226)(62,226) \qbezier(62,252)(49,226)(62,226) \qbezier(62,252)(47,277)(40,266) \qbezier(62,252)(33,255)(40,266) \qbezier(62,252)(47,227)(40,238) \qbezier(62,252)(33,249)(40,238) \put(62,252){\line(1,0){60}} \qbezier(122,252)(148,265)(148,252) \qbezier(122,252)(148,239)(148,252) \qbezier(122,252)(147,267)(136,274) \qbezier(122,252)(125,281)(136,274) \qbezier(122,252)(147,237)(136,230) \qbezier(122,252)(125,223)(136,230) \put(77,252){\circle*3} \put(92,252){\circle*3} \put(107,252){\circle*3} 
\put(30,250){$k$} 
\put(145,265){$\ell$} 
\put(82,257){$N$} 
\put(186,220){$(b)$} \qbezier(224,252)(237,278)(224,278) \qbezier(224,252)(211,278)(224,278) \qbezier(224,252)(237,226)(224,226) \qbezier(224,252)(211,226)(224,226) \qbezier(224,252)(209,277)(202,266) \qbezier(224,252)(195,255)(202,266) \qbezier(224,252)(209,227)(202,238) \qbezier(224,252)(195,249)(202,238) \put(224,252){\line(1,0){56}} \put(280,252){\circle*3} {\thinlines\put(280,252){\line(1,0){11}} \put(292,252){\circle3} \qbezier(293,253)(315,265)(318,252) \qbezier(293,251)(315,239)(318,252) \put(318,252){\circle*3} \qbezier(293,253)(315,264)(306,274) \qbezier(293,251)(315,240)(306,230) \put(306,274){\circle*3} \put(306,230){\circle*3} \qbezier(306,274)(293,278)(292,253) \qbezier(306,230)(293,226)(292,251) }\put(238,252){\circle*3} \put(252,252){\circle*3} \put(266,252){\circle*3} 
\put(192,250){$k$} 
\put(314,265){$\ell$} 
\put(241,257){$N'$} 
\put(4,148){$(c )$} \put(37,180){\circle*3} \qbezier(37,180)(11,193)(11,180) \qbezier(37,180)(11,167)(11,180) \qbezier(37,180)(12,195)(23,202) \qbezier(37,180)(34,209)(23,202) \qbezier(37,180)(12,165)(23,158) \qbezier(37,180)(34,151)(23,158) {\thinlines\put(37,180){\line(1,0){41}} \put(79,180){\circle3} \qbezier(37,180)(58,195)(78,181) \qbezier(37,180)(58,165)(78,179) \qbezier(37,180)(58,211)(78,181) \qbezier(37,180)(58,149)(78,179) \qbezier(80,181)(102,193)(105,180) \qbezier(80,179)(102,167)(105,180) \put(105,180){\circle*3} \qbezier(80,181)(102,192)(93,202) \qbezier(80,179)(102,168)(93,158) \put(93,202){\circle*3} \put(93,158){\circle*3} \qbezier(93,202)(80,206)(79,181) \qbezier(93,158)(80,154)(79,179) }
\put(6,191){$K$} 
\put(101,191){$L$} 
\put(43,198){$M'$} 
\put(124,148){$(d)$} \put(138,194){\circle*3} \put(166,166){\circle*3} \qbezier(138,194)(152,208)(166,194) \qbezier(166,166)(152,152)(138,166) \put(166,194){\circle*3} \put(138,166){\circle*3} \qbezier(166,194)(180,180)(166,166) \qbezier(138,166)(124,180)(138,194) \put(152,201){\circle*3} \put(152,159){\circle*3} \put(173,180){\circle*3} \put(131,180){\circle*3} \put(173,180){\line(1,0){42}} \put(215,180){\circle*3} \put(187,180){\circle*3} \put(201,180){\circle*3} 
\put(135,179){$\stackrel{\min(m,n)}{}$} 
\put(182,187){$\frac{|m-n|}2$} 
\put(232,148){$(e)$} \put(246,194){\circle*3} \put(274,166){\circle*3} \qbezier(246,194)(260,208)(274,194) \qbezier(274,166)(260,152)(246,166) \put(274,194){\circle*3} \put(246,166){\circle*3} \qbezier(274,194)(288,180)(274,166) \qbezier(246,166)(232,180)(246,194) \put(260,201){\circle*3} \put(260,159){\circle*3} \put(281,180){\circle*3} \put(239,180){\circle*3} \put(281,180){\line(1,0){42}} \put(323,180){\circle*3} {\thinlines\put(323,180){\line(1,0){12}} \put(336,180){\circle3} }\put(295,180){\circle*3} \put(309,180){\circle*3} 
\put(243,179){$\stackrel{\min(m,n)}{}$} 
\put(283,186){$\frac{|m-n|-1}2$} {\thinlines
\put(4,76){$(f)$} \put(30,108){\circle*3} \put(30,108){\line(-1,0){17}} \put(12,108){\circle3} \put(30,108){\line(-6,-5){12}} \put(30,108){\line(-6,5){12}} \put(17,119){\circle3} \put(17,97){\circle3} \put(30,108){\line(-1,-5){3}} \put(30,108){\line(-1,5){3}} \put(27,124){\circle3} \put(27,92){\circle3} \qbezier(30,108)(50,118)(69,109) \qbezier(30,108)(50,98)(69,107) \qbezier(30,108)(50,137)(69,109) \qbezier(30,108)(50,79)(69,107) \put(70,108){\circle3} \put(71,108){\line(3,-1){15}} \put(71,108){\line(3,1){15}} \put(86,113){\circle*3} \put(86,103){\circle*3} \put(71,107){\line(3,-4){9}} \put(71,109){\line(3,4){9}} \put(80,121){\circle*3} \put(80,95){\circle*3} 
\put(4,116){$K$} 
\put(87,120){$L$} 
\put(60,123){$M$} 
\put(109,76){$(g)$} \put(137,108){\circle*3} \put(137,108){\line(-1,-1){17}} \put(137,108){\line(-1,1){17}} \put(119,126){\circle3} \put(119,90){\circle3} \qbezier(137,108)(119,108)(119,125) \qbezier(137,108)(119,108)(119,91) \qbezier(137,108)(137,126)(120,126) \qbezier(137,108)(137,90)(120,90) \qbezier(137,108)(157,118)(176,109) \qbezier(137,108)(157,98)(176,107) \qbezier(137,108)(157,138)(176,109) \qbezier(137,108)(157,78)(176,107) \put(177,108){\circle3} \put(178,107){\line(1,-1){17}} \put(178,109){\line(1,1){17}} \put(195,126){\circle*3} \put(195,90){\circle*3} \qbezier(178,108)(195,108)(195,126) \qbezier(178,108)(195,108)(195,90) \qbezier(177,109)(177,126)(195,126) \qbezier(177,107)(177,90)(195,90) 
\put(111,106){$K$} 
\put(193,106){$L$} 
\put(163,125){$M''$} 
\put(218,76){$(h)$} \put(246,108){\circle*3} \put(246,108){\line(-1,0){25}} \put(220,108){\circle3} \qbezier(246,108)(233,121)(221,109) \qbezier(246,108)(233,95)(221,107) \put(246,108){\line(0,1){25}} \put(246,108){\line(0,-1){25}} \put(246,82){\circle3} \put(246,134){\circle3} \qbezier(246,108)(233,121)(245,133) \qbezier(246,108)(233,95)(245,83) \qbezier(246,108)(259,121)(247,133) \qbezier(246,108)(259,95)(247,83) \put(246,108){\line(1,0){12}} \put(259,108){\circle3} \qbezier(260,109)(267,117)(274,109) \qbezier(260,107)(267,99)(274,107) \put(275,108){\circle*3} \put(275,108){\line(1,0){12}} \put(288,108){\circle3} \qbezier(289,109)(296,117)(303,109) \qbezier(289,107)(296,99)(303,107) \put(304,108){\circle*3} \put(304,108){\line(1,0){12}} \put(317,108){\circle3} \put(318,107){\line(1,-1){17}} \put(318,109){\line(1,1){17}} \put(335,126){\circle*3} \put(335,90){\circle*3} \qbezier(318,108)(335,108)(335,126) \qbezier(318,108)(335,108)(335,90) \qbezier(317,109)(317,126)(335,126) \qbezier(317,107)(317,90)(335,90) 
\put(228,120){$k$} 
\put(334,106){$\ell$} 
\put(271,115){$N'$} 
\put(4,4){$(i)$} \put(31,36){\circle*3} \put(31,36){\line(-1,0){24}} \put(6,36){\circle3} \qbezier(31,36)(18,49)(7,37) \qbezier(31,36)(18,23)(7,35) \put(31,36){\line(0,1){24}} \put(31,36){\line(0,-1){24}} \put(31,11){\circle3} \put(31,61){\circle3} \qbezier(31,36)(18,49)(30,60) \qbezier(31,36)(18,23)(30,12) \qbezier(31,36)(44,49)(32,60) \qbezier(31,36)(44,23)(32,12) \qbezier(31,36)(39,45)(46,37) \qbezier(31,36)(39,27)(46,35) \put(47,36){\circle3} \put(48,36){\line(1,0){12}} \put(60,36){\circle*3} \qbezier(61,37)(68,45)(75,37) \qbezier(61,35)(68,27)(75,35) \put(76,36){\circle3} \put(77,36){\line(1,0){12}} \put(89,36){\circle*3} \qbezier(90,37)(97,45)(104,37) \qbezier(90,35)(97,27)(104,35) \put(105,36){\circle3} \put(106,35){\line(1,-1){17}} \put(106,37){\line(1,1){17}} \put(123,54){\circle*3} \put(123,18){\circle*3} \qbezier(106,36)(123,36)(123,54) \qbezier(106,36)(123,36)(123,18) \qbezier(105,37)(105,54)(123,54) \qbezier(105,35)(105,18)(123,18) 
\put(13,48){$k$} 
\put(122,34){$\ell$} 
\put(55,43){$N'$} 
\put(142,4){$(j)$} \put(169,36){\circle*3} \put(169,36){\line(-1,0){24}} \put(144,36){\circle3} \qbezier(169,36)(156,49)(145,37) \qbezier(169,36)(156,23)(145,35) \put(169,36){\line(0,1){24}} \put(169,36){\line(0,-1){24}} \put(169,11){\circle3} \put(169,61){\circle3} \qbezier(169,36)(156,49)(168,60) \qbezier(169,36)(156,23)(168,12) \qbezier(169,36)(182,49)(170,60) \qbezier(169,36)(182,23)(170,12) \put(169,36){\line(1,0){13}} \put(183,36){\circle3} \qbezier(184,37)(191,45)(198,37) \qbezier(184,35)(191,27)(198,35) \put(199,36){\circle*3} \put(199,36){\line(1,0){12}} \put(212,36){\circle3} \qbezier(213,37)(220,45)(227,37) \qbezier(213,35)(220,27)(227,35) \put(228,36){\circle*3} \put(228,36){\line(1,-1){17}} \put(228,36){\line(1,1){17}} \put(246,54){\circle3} \put(246,18){\circle3} \qbezier(228,36)(246,36)(246,53) \qbezier(228,36)(246,36)(246,19) \qbezier(228,36)(228,54)(245,54) \qbezier(228,36)(228,18)(245,18) 
\put(152,48){$k$} 
\put(245,34){$\ell$} 
\put(181,43){$N$} 
\put(265,4){$(k)$} \put(267,36){\circle*3} \qbezier(267,36)(276,50)(284,37) \qbezier(267,36)(276,22)(284,35) \put(267,36){\line(1,0){17}} \put(285,36){\circle3} \qbezier(286,37)(294,47)(301,36) \qbezier(286,35)(294,25)(301,36) \put(301,36){\circle*3} \put(301,36){\line(1,0){17}} \qbezier(301,36)(310,50)(318,36) \qbezier(301,36)(310,22)(318,36) \put(319,36){\circle3} \qbezier(320,37)(328,47)(335,36) \qbezier(320,35)(328,25)(335,36) \put(335,36){\circle*3} 
\put(282,45){$N$} }
\end{picture}\caption{Dessins d'enfant for Jacobi polynomials}\label{fg:jacobi}\end{figure}

\begin{remark}
The equation $E(k+1/2,\ell+1/2,n)$ with $k,\ell,n\in\ZZ$ has either logarithmic solutions
or the $\ZZ/2\ZZ$ monodromy. The distinction appears to be tricky \cite{degeneratehpg}.
The dessins in Figure \ref{fg:jacobi}~\refpart{a}--\refpart{c} illustrate the distinction nicely.
The pull-back Belyi covering is possible exactly when the monodromy is $\ZZ/2\ZZ$.
If $k+\ell+n$ is even, this is the case only when $n>k+\ell$. 
If $k+\ell+n$ is odd, we should have either $n>k+\ell$ or the three triangle inequalities satisfied.
\end{remark}

\begin{remark}
The monodromy $\ZZ/2\ZZ$ can be interpreted as a dihedral monodromy.
Hence pull-back computations in $\cite{DihedralTR}$ can be applied. In fact, 
the logarithmic derivative ansatz and Lemma $\ref{lm:gpback}$ have been 
basically used in $\cite[\S 5.3]{DihedralTR}$ with $k=\ell=2$.
Thereby pull-back transformations from $\hpgde{1/2,1/2,1/m}$ to
$\hpgde{k+1/2,\ell+1/2,n/m}$ are obtained\footnote{with any $k,\ell$, 
as we are now outside of Lemma $\ref{lm:gpback}$.}
of degree \mbox{$d=(k+\ell)m+n$}.
The pull-back coverings are Belyi functions defined by the polynomial identity 
\begin{equation} \label{eq:dihedid}
\Theta_1(x)^2-x^{2k+1}\Theta_2(x)^2=(1-x)^n\Psi(x)^m.
\end{equation}
It is proved in $\cite[\S 5.3]{DihedralTR}$ that:
\begin{itemize}
\item $\Psi(x)$ is a solution of a third order Fuchsian
equation\footnote{As predictable by differential Galois theory \cite{kleinvhw}, % singulm1},
the linear Fuchsian equation is the second symmetric tensor power %equation 
of $\hpgde{k+1/2,\ell+1/2,n/m}$. }
% Equation (\ref{eq:q23m}) with $m=2$ is the third symmetric tensor power of 
% a dihedral hypergeometric equation (with the monodromy of 6 elements). 
% The polynomial $Q$ has to be identified as $\Theta_1(x)$ while setting $m=3$ here.}
with the singularities at $x=0,1,\infty$. 
\item $\Theta_1(x)$, $x^{k+1/2}\,\Theta_2(x)$ are solutions of a second order Fuchsian
equation with the singularities at $x=0,1,\infty$ and at the roots of $\Psi(x)$.
\end{itemize}
This generalizes \S $\ref{ex:chebyshev}$,  up to the transformation $x\mapsto x/(x-1)$
and a trigonometric substitution.

The $\ZZ/2\ZZ$ monodromy is the special case $m=1$. 
Inspection of Riemann's $P$-symbols for the second order
equations for $\Theta_1(x)$, $\Theta_2(x)$ at the end of $\cite[\S 5.3]{DihedralTR}$
shows that the roots of $\Psi(x)$ are not singularities then.
The equations are then hypergeometric, and we can identify % (up to constant multiples):
\begin{align} \label{eq:dihedm1}
\Theta_1(x)&\cong \hpg21{-\frac{d}2,\ell+\frac{1-d}2}{\frac12-k}{x},\\
\label{eq:dihedm1a}
\Theta_2(x)&\cong \hpg21{k+\ell+1-\frac{d}2,k+\frac{1-d}2}{\frac32+k}{x},
\end{align}
with the degree $d=k+\ell+n$. 
This is consistent with the expressions of $\Theta_1(x),\Theta_2(x)$
as Jacobi polynomials here.

Jacobi polynomials can be identified in other dihedral case $k=1$, $\ell=0$ (and any $m$).
Then hypergeometric expressions in \cite[\S 5.2]{DihedralTR} give
\begin{align*}
\Theta_1(x)\cong P_{\lceil m/2 \rceil}^{(-3/2,-m)}\!\left(1-\frac{2x}{m^2}\right),\quad
\Theta_2(x)\cong %\frac{m^2-1}{3m^2}\,
P_{\lfloor m/2 \rfloor-1}^{(3/2,-m)}\!\left(1-\frac{2x}{m^2}\right).
\end{align*}
The dessins d'enfant are depicted in Figure \ref{fg:jacobi}~\refpart{d}--\refpart{e}.
\end{remark}

Jacobi polynomials appear in the same way in pull-back transformations of hypergeometric
equations with other finite cyclic monodromies. Those pull-back transformations are implied
by Klein's theorem for second order Fuchsian equations with finite monodromy \cite{kleinvhw}.
As a special case, the hypergeometric equations with trivial monodromy
are $E(\ell,n,m)$ with odd $k+\ell+m$ and the triangle equalities $n<k+\ell$, $k<\ell+n$,
$\ell<k+n$ satisfied \cite[\S 8]{degeneratehpg}. 
The Kleinian Belyi covering for $E(\ell,n,m)$ is defined by the identity
\begin{align*}
P_{M-1}^{(-k,-\ell)}(2x-1)=
(-1)^{k+1}\,x^\ell P_K^{(-k,\ell)}(2x-1) + (1-x)^k P_L^{(k,-\ell)}(2x-1).
\end{align*}
Here $K=(k-\ell+n-1)/2$, $L=(\ell-k+n-1)/2$, $M=(k+\ell-n+1)/2$.
The dessin d'enfant is depicted in Figure \ref{fg:jacobi}~\refpart{f}.

Kleinian transformations for hypergeometric equations with the $\ZZ/m\ZZ$ monodromy 
are obtained from the solution basis in \cite[(42)]{degeneratehpg}.
The Belyi covering is the quotient of the following two functions raised to the $m$th power:
\begin{equation}
x^{p/m} P_K^{(p/m,-n)}(1-2x), \qquad
P_L^{(-p/m,-n)}(1-2x).
\end{equation}
Here $n=K+L+1$, and $p\in\ZZ\setminus m\ZZ$.  
The local exponent differences are $n,p/m,K-L+p/m$.
If we express the hypergeometric equation as $E(p/m,q/m,n)$ 
with positive integers $p,q,m$ such that $p,q\not\in m\ZZ$,
then either $p-q$ or $p+q$ must be divisible by $m$. 
Let $r$ be the integer in $\{(p-q)/m,(p+q)/m\}$;
then $n+r$ must be odd because $K=(n-r-1)/2$, $L=(n+r-1)/2$. 
% \begin{equation}
% (-1)^{n+m}\,x^{p/q} P_n^{(-n-m-1,p/q)}(2x-1), \qquad
% P_m^{(-n-m-1,-p/q)}(2x-1).
% \end{equation}

The dessins d'enfant for $m=3$ are depicted in Figure \ref{fg:jacobi}~\refpart{g}--\refpart{j}.
There $M''=(p+q+m(1-n))/2$, $k=\lfloor p/m \rfloor$, $\ell=\lfloor q/m \rfloor$, 
and $N=(n-k-\ell)/2$,  $N'=(n-k-\ell-1)/2$ as above. 
To describe the dessins for larger $m$, let us call an {\em $s$-bridge} 
a sequence of exactly $s$ edges connecting a pair of black and white vertices. For example,
the dessin \refpart{g} can be described as having some number of $3$-bridges at both ends and
an $M''$-bridge in between. The dessins \refpart{h}--\refpart{j} have an alternative sequence 
of 1- and 2-bridges between the ends. The dessins for larger $m$ have either the shape like 
dessin \refpart{g} with $m$-bridges (rather than 3-bridges) at both ends, 
or look like dessins \refpart{h}--\refpart{j} 
with end blossoms of $m$-bridges connected by an alternative sequence of $s$- and $t$-bridges
with $s+t=m$. The dessins with $k=\ell=0$ are just sequences of alternating $s$- and $t$-bridges,
like in Figure \ref{fg:jacobi}~\refpart{k}.
Then either $p+q=m$ or $p=q<m$. The pull-back Belyi function can then be expressed in terms of
properly orthogonal (i.e., $\alpha,\beta>-1$) Jacobi polynomials. To get the Belyi function,
we take the $m$-th power of
\begin{equation}
\frac{x^{p/m}\,P_{n/2-1}^{(p/m,\,q/m)}(\frac{1+x}{1-x})}
{(1-x)\,P_{n/2}^{(-p/m,-q/m)}(\frac{1+x}{1-x})} \quad\mbox{or}\quad 
\frac{x^{p/m}\,P_{(n-1)/2}^{(p/m,-p/m)}(\frac{1+x}{1-x})}
{P_{(n-1)/2}^{(-p/m,\,p/m)}(\frac{1+x}{1-x})}
\end{equation}
for $p+q=m$ or $p=q<m$, respectively.

\small

\bibliographystyle{plain}
\bibliography{AGSF}

\begin{thebibliography}{10}

\bibitem{specfaar}
G.~E. Andrews, R.~Askey, and R.~Roy.
\newblock {\em Special Functions}.
\newblock Cambridge Univ. Press, Cambridge, 1999.

\bibitem{Couveignes94}
J.-M. Couveignes.
\newblock Calcul et rationalit\'e de fonctions de {B}elyi en genre 0.
\newblock {\em Ann. de 'iInst.~Fourier}, 44(1):1--38, 1994.

\bibitem{davenport65}
H.~Davenport.
\newblock On $f^3(t)-g^2(t)$.
\newblock {\em Norske Vid. Selsk. Forh.}, 38:86--87, 1965.

\bibitem{ElkiesWatkins}
N.M. Elkies and M.~Watkins.
\newblock Polynomial and fermat-pell families that attain the davenport-mason
  bound.
\newblock 2011.

\bibitem{Filimonenkov}
V.~Filimonenkov and G.~Shabat.
\newblock Fields of definition of rational functions of one variable with three
  critical values.
\newblock {\em Fundamentalnaya i Prikladnaya Matematika}, 1:781--799, 1995.

\bibitem{GrothESQ}
A.~{G}rothednieck.
\newblock Esquisse d\'un {P}rogramme (1984).
\newblock In L.~Schneps and P.~Lochak, editors, {\em Geometric Galois Actions},
  volume 242 of {\em London Math. Soc. Lecture Notes}, pages 5--48; English
  transl. 243--283. Cambridge University Press, 1997.

\bibitem{Hempel}
J.A. Hempel.
\newblock Existence conditions for a class of modular subgroups of genus zero.
\newblock {\em Bull.~Austr.~Math.~Soc.}, 66:517--525, 2002.

\bibitem{hall71}
M.~Hall Jr.
\newblock The {D}iophantine equation $x^3-y^2=k$.
\newblock In B.~Birch A.~Atkin, editor, {\em Computers in Number Theory}, pages
  173--198. Academic Press, 1971.

\bibitem{kreines}
E.~Kreines.
\newblock On families of geometric parasitic solutions for {B}elyi systems of
  genus zero.
\newblock {\em Fundamentalnaya i Priklandaya Matematika}, 9(1):103--111, 2003.

\bibitem{LandoZvonkin}
S.K. Lando and A.K. Zvonkin.
\newblock {\em Graphs on Surfaces and their Applications}, volume 141 of {\em
  Encyclopedia of Mathematical Sciences}.
\newblock Springer-Verlag, 2004.

\bibitem{Macrae}
R.~E. Macrae.
\newblock On rational points on conics.
\newblock {\em Proc. AMS}, 67(1):38--40, 1977.

\bibitem{Magot97}
N.~Magot.
\newblock {\em Cartes planaires et fonctions de Belyi: Aspects algorithmiques
  et exp\'erimentaux}.
\newblock PhD thesis, Universit\'e Bordeaux I, 1997.

\bibitem{platonic}
N.~Magot and A.~Zvonkin.
\newblock Belyi functions for archimedean solids.
\newblock {\em Disc. Math.}, 217:249--271, 2000.

\bibitem{MovReiter}
H.~Movasati and S.~Reiter.
\newblock Heun equations coming from geometry.
\newblock Available at {\sf arxiv:math.AG/0902.0760}. The published version in
  {\em Bull.~Braz.~Math.~Soc.} 43 (2012), 423--442 does not contain the
  referenced Proposition 2.

\bibitem{SerreGC}
J.-P. Serre.
\newblock {\em Corps locaux}.
\newblock Hermann, Parris, 1968.

\bibitem{stothers81}
W.W. Stothers.
\newblock Polynomial identities and hauptmoduln.
\newblock {\em Quart. J. Math. Oxford (2)}, 32(127):349--370, 1981.

\bibitem{HeunURL}
M.~van Hoeij and R.~Vidunas.
\newblock Online data for ''{B}elyi functions for hyperbolic
  hypergeometric-to-{H}eun transformations".
\newblock {\sf http://www.math.fsu.edu/\~{}hoeij/Heun}.

\bibitem{HyperbHeun}
M.~van Hoeij and R.~Vidunas.
\newblock Belyi functions for hyperbolic hypergeometric-to-{H}eun
  transformations.
\newblock Available at {\sf arxiv:1212.3803}, 2012.

\bibitem{kleinvhw}
M.~van Hoeij and J.-A. Weil.
\newblock Solving second order linear differential equations with {K}lein's
  theorem.
\newblock Proceedings of the 2005 International Symposium on Symbolic and
  Algebraic Computation (ISSAC), 2005.

\bibitem{Verschelde}
J.~Verschelde.
\newblock {PHC}pack: a general-purpose solver for polynomial systems by
  homotopy continuation.
\newblock {\em ACM Trans. Math. Softw.}, Alg. 795, 1999.

\bibitem{thyperbolic}
R.~Vidunas.
\newblock Transformations of some {G}auss hypergeometric functions.
\newblock {\em Journ.~Comp. Applied Math.}, 178:473--487, 2005.

\bibitem{degeneratehpg}
R.~Vidunas.
\newblock Degenerate {G}auss hypergeometric functions.
\newblock {\em Kyushu Journal of Mathematics}, 61:109--135, 2007.
\newblock Available at {\sf http://arxiv.org/math.CA/0407265}.

\bibitem{VidunasFE}
R.~Vidunas.
\newblock Algebraic transformations of {G}auss hypergeometric functions.
\newblock {\em Funkcialaj Ekvacioj}, 52(2):139--180, 2009.

\bibitem{DihedralTR}
R.~Vidunas.
\newblock Transformations and invariants for dihedral {G}auss hypergeometric
  functions.
\newblock {\em Kyushu J. Math.}, 66(1):143--170, 2012.

\bibitem{HeunClass}
R.~Vidunas and G.~Filipuk.
\newblock A classification of coverings yielding {H}eun-to-hypergeometric
  reductions.
\newblock Accepted by {\em Osaka J.~Math.} Available at {\sf
  arxiv:1204.2730[math/CA]}, 2012.

\bibitem{HeunForm}
R.~Vidunas and G.~Filipuk.
\newblock Parametric transformations between the {H}eun and {G}auss
  hypergeometric functions.
\newblock Accepted by {\em Funkcialaj Ekvacioj}. Available at {\sf
  arxiv:0910.3087v2[math/CA]}, 2012.

\bibitem{VidunasKitaev}
R.~Vidunas and A.~V. Kitaev.
\newblock Computation of highly ramified coverings.
\newblock {\em Math. Comp.}, 78:2371--2395, 2009.

\bibitem{Wikipedia}
Wikipedia.
\newblock  {F}uchsian equation, {M}\"obius transformation, $j$-invariant, Dessin d'enfant.
\newblock {\sf http://en.wikipedia.org/}.

\end{thebibliography}

\end{document}